\newcommand{\IN}{\mathbb N}
\newcommand{\IZ}{\mathbb Z}
\newcommand{\w}{\omega}
\newcommand{\F}{\mathcal F}
\newcommand{\A}{\mathcal A}
\newcommand{\rank}{\mathrm{rank}}
\newcommand{\Ra}{\Rightarrow}
\newcommand{\Tr}{\mathsf{Tr}}
\newcommand{\WF}{\mathsf{WF}}
\newcommand{\U}{\mathcal{U}}
\title{On thin-complete ideals of subsets of groups}
\author{Taras Banakh and Nadya Lyaskovska}
\address{Faculty of Mechanics and Mathematics, Ivan Franko National University of Lviv, Universytetska 1, 79000, Ukraine}
\email{tbanakh@yahoo.com; lyaskovska@yahoo.com}
\subjclass{03E15; 05E15; 20F99; 54H05}
\keywords{Thin-complete ideal, group, well-founded tree, coanalytic space, analytic space}
\newtheorem{theorem}{Theorem}[section]
\newtheorem{lemma}[theorem]{Lemma}
\newtheorem{corollary}[theorem]{Corollary}
\newtheorem{proposition}[theorem]{Proposition}
\theoremstyle{definition}
\newtheorem{remark}[theorem]{Remark}
\newtheorem{example}[theorem]{Example}
\newtheorem{definition}[theorem]{Definition}
\newtheorem{claim}[theorem]{Claim}
\newtheorem{question}[theorem]{Question}
\begin{document}
\begin{abstract} Let $\F\subset\mathcal P_G$ be a left-invariant lower family of subsets of a group $G$. A subset $A\subset G$ is called {\em $\F$-thin} if $xA\cap yA\in\F$ for any distinct elements $x,y\in G$. The family of all $\F$-thin subsets of $G$ is denoted by $\tau(\F)$. If $\tau(\F)=\F$, then $\F$ is called {\em thin-complete}. The {\em thin-completion} $\tau^*(\F)$ of $\F$ is the smallest thin-complete subfamily of $\mathcal P_G$ that contains $\F$. 

Answering questions of Lutsenko and Protasov, we prove that a set $A\subset G$ belongs to $\tau^*(G)$ if and only if for any sequence $(g_n)_{n\in\w}$ of non-zero elements of $G$ there is $n\in\w$ such that $$\bigcap\limits_{i_0,\dots,i_n\in\{0,1\}}g_0^{i_0}\cdots g_n^{i_n}A\in\F.$$
Also we prove that for an additive family $\F\subset\mathcal P_G$ its thin-completion $\tau^*(\F)$ is additive. If the group $G$ is countable and torsion-free, then the completion $\tau^*(\F_G)$ of the ideal $\F_G$ of finite subsets of $G$ is coanalytic and not Borel in the power-set $\mathcal P_G$ endowed with the natural compact metrizable topology. 
\end{abstract}

\maketitle
\section{Introduction}

This paper was motivated by problems posed by Ie.~Lutsenko and I.V.~Protasov in a preliminary version of the paper \cite{LP2} devoted to relatively thin sets in groups. 

Following \cite{LP1}, we say that a subset $A$ of a group $G$ is {\em thin} if for any distinct points $x,y\in G$ the intersection $xA\cap yA$ is finite. In \cite{LP2} (following the approach of \cite{BL}) Lutsenko and Protasov generalized the notion of a thin set to that of $\F$-thin set where $\F$ is a family of subsets of $G$. By $\mathcal P_G$ we shall denote the Boolean algebra of all subsets of the group $G$.

We shall say that a family $\F\subset\mathcal P_G$ is 
\begin{itemize}
\item {\em left-invariant} if $xF\in\F$ for all $F\in\F$ and $x\in G$, and  
\item {\em additive} if $A\cup B\in\F$ for all $A,B\in\F$;
\item {\em lower} if $A\in\F$ for any $A\subset B\in\F$;
\item {\em an ideal} if $\F$ is lower and additive. 
\end{itemize}

Let $\F\subset\mathcal P_G$ be a left-invariant lower family of subsets of a group $G$. A subset $A\subset G$ is defined to be {\em $\F$-thin} if for any distinct points $x,y\in G$ we get $xA\cap yA\in\F$. The family of all $\F$-thin subsets of $G$ will be denoted by $\tau(\F)$. It is clear that $\tau(\F)$ is a left-invariant lower family of subsets of $G$ and $\F\subset\tau(\F)$. If $\tau(\F)=\F$, then the family $\F$ will be called {\em thin-complete}. 
\smallskip

Let $\tau^*(\F)$ be the intersection of all thin-complete families $\tilde \F\subset\mathcal P_G$ that contain $\F$. It is clear that $\tau^*(\F)$ is the smallest thin-complete family containing $\F$. This family is called the {\em thin-completion} of $\F$.

The family $\tau^*(\F)$ has an interesting hierarchic structure that can be described as follows. Let $\tau^0(\F)=\F$ and for each ordinal $\alpha$ put $\tau^\alpha(\F)$ be the family of all sets $A\subset G$ such that for any distinct points $x,y\in G$ we get $xA\cap yA\in \bigcup_{\beta<\alpha}\tau^{\beta}(\F)$. So, $$\tau^\alpha(\F)=\tau(\tau^{<\alpha}(\F))\mbox{ \ where \ }\tau^{<\alpha}(\F)=\bigcup_{\beta<\alpha}\tau^\beta(\F).$$ 
By Proposition 3 of \cite{LP2}, $\tau^*(\F)=\bigcup\limits_{\alpha<|G|^+}\tau^\alpha(\F)$.  

The following theorem (that will be proved in Section~\ref{s2}) answers the problem of combinatorial characterization of the family $\tau^*(\F)$ posed by Ie.~Lutsenko and I.V.~Protasov.
Below by $e$ we denote the neutral element of the group $G$.

\begin{theorem}\label{char-t} Let $\F\subset\mathcal P_G$ be a left-invariant lower family of subsets of a group $G$. A subset $A\subset G$ belongs to the family $\tau^*(\F)$ if and only if for any sequence $(g_n)_{n\in\w}\in (G\setminus\{e\})^\IN$ there is a number $n\in\w$ such that 
$$\bigcap_{k_0,\dots,k_{n}\in\{0,1\}}g_0^{k_0}\cdots g_n^{k_n}A\in\F.$$
\end{theorem}

We recall that a family $\F\subset\mathcal P_G$ is called {\em additive} if $\{A\cup B:A,B\in\F\}\subset\F$. 
It is clear that the family $\F_G$ of finite subsets of a group $G$ is additive. If $G$ is an infinite Boolean group, then the family $\tau^*(\F_G)=\tau(\F_G)$ is not additive, see Remark 3 in \cite{LP2}. For torsion-free groups the situation is totally diferent. Let us recall that a group $G$ is {\em torsion-free} if each non-zero element of $G$ has infinite order.

\begin{theorem}\label{ideal} For a torsion-free group $G$ and a left-invariant ideal $\F\subset\mathcal P_G$ the family $\tau^{<\alpha}(\F)$ is additive for any limit ordinal $\alpha$. In particular, the thin-completion $\tau^*(\F)$ of $\F$ is an ideal in $\mathcal P_G$.
\end{theorem}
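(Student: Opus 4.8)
\emph{Plan of proof.} The approach is to work with the \emph{thinness rank}: for $A\in\tau^*(\F)$ set $\rho(A)=\min\{\alpha:A\in\tau^\alpha(\F)\}$, so that $A\in\tau^{<\lambda}(\F)$ iff $\rho(A)<\lambda$. From left-invariance of $\F$ one gets the basic recursion $A\in\tau^\alpha(\F)\iff A\cap gA\in\tau^{<\alpha}(\F)$ for all $g\in G\setminus\{e\}$; hence $\rho$ is monotone under inclusion and, for $A\notin\F$, $\rho(A)=\sup_{g\ne e}\bigl(\rho(A\cap gA)+1\bigr)$, so in particular $\rho(A\cap gA)<\rho(A)$ whenever $g\ne e$. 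Writing every ordinal uniquely as $\lambda+k$ with $\lambda=0$ or $\lambda$ limit and $k<\w$, I will call $[\lambda,\lambda+\w)$ the \emph{block} of $\lambda+k$. The theorem then amounts to: $\rho(A\cup B)$ always lies in the block of $\max\{\rho(A),\rho(B)\}$.

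The combinatorial heart is a ``two levels down'' estimate. For $C=A\cup B$ and $g,h\in G\setminus\{e\}$ one has
\[
\bigl(C\cap gC\bigr)\cap h\bigl(C\cap gC\bigr)=\bigcup_{X,Y,X',Y'\in\{A,B\}}X\cap gY\cap hX'\cap hgY',
\]
and I claim each of the sixteen summands has rank $<m:=\max\{\rho(A),\rho(B)\}$. For fourteen of the index tuples two of the four ``slots'' carry the same label in one of the always-available positions $(X,Y),(X',Y'),(X,X'),(Y,Y')$, which places the summand inside a translate of $P\cap kP$ with $P\in\{A,B\}$ and $k$ one of the nonidentity elements $g,h,g^{-1}hg$, hence of rank $<\rho(P)\le m$. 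The two remaining tuples are the alternating ones $(A,B,B,A)$ and $(B,A,A,B)$, where the only repeated labels sit in the pairs $(X,Y')$ and $(Y,X')$, available only for special $h$. For $h\notin\{g,g^{-1}\}$ we have $hg\ne e\ne g^{-1}h$ and $A\cap gB\cap hB\cap hgA\subseteq A\cap hgA$, of rank $<m$; for $h=g$ the summand lies in $A\cap g^{2}A$, and for $h=g^{-1}$ in $g(B\cap g^{-2}B)$ --- and \emph{here torsion-freeness enters}: $g^{2}\ne e$ and $g^{-2}\ne e$, so the rank again drops below $m$. (The $N$-fold analogue, needed below, goes $r=r(N)$ levels down, writing the deep intersection as a union of boundedly many pieces indexed by colourings $\phi\colon\{0,1\}^r\to\{1,\dots,N\}$: a colouring that is not a proper colouring of the cube graph yields a reducible piece for free, while for a proper $\phi$ torsion-freeness prevents the products $g_r^{k_r}\cdots g_1^{k_1}$ from being constant on a sufficiently large colour class, so again a reducible pair exists once $r$ is large enough in terms of $N$.)

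Finally one assembles these facts by transfinite induction on an ordinal $\gamma$, proving the \emph{uniform} statement $(\dagger_\gamma)$: for every $N\in\IN$ there is an ordinal $U(N,\gamma)$ in the block of $\gamma$ with $\rho(X_1\cup\cdots\cup X_N)\le U(N,\gamma)$ whenever all $\rho(X_i)\le\gamma$. The case $\gamma=0$ is just the additivity of $\F$. For $\gamma>0$ and a given $N$, the $N$-fold version of the heart expresses each $r$-level deep intersection of $C=X_1\cup\cdots\cup X_N$ as a union of $M=M(N)$ pieces of rank $<\gamma$; one then applies the induction hypothesis \emph{once} --- at the finite supremum of the piece-ranks if $\gamma$ is a limit, or at $\gamma-1$ if $\gamma=\lambda+k$ with $k\ge 1$, both of which are $<\gamma$ and lie in the block of $\gamma$ --- to bound that deep intersection by a single ordinal $<\lambda+\w$ \emph{independent of the chosen group elements}; climbing back up the $r$ suprema $\rho(Z)=\sup_{g\ne e}(\rho(Z\cap gZ)+1)$ then only adds a finite amount and stays in the block. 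This proves $(\dagger_\gamma)$. Taking $N=2$: if $\alpha$ is a limit ordinal and $A,B\in\tau^{<\alpha}(\F)$, then $\gamma:=\max\{\rho(A),\rho(B)\}<\alpha$, its block-start $\lambda$ satisfies $\lambda+\w\le\alpha$, and so $\rho(A\cup B)\le U(2,\gamma)<\lambda+\w\le\alpha$, i.e.\ $A\cup B\in\tau^{<\alpha}(\F)$; hence $\tau^{<\alpha}(\F)$ is additive. Since $\tau^*(\F)=\tau^{<|G|^{+}}(\F)$ and $|G|^{+}$ is a limit ordinal, $\tau^*(\F)$ is additive, and being also lower (an intersection of lower families) it is an ideal.

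\emph{Main obstacle.} The delicate point is this last step. A naive induction recombining the pieces two at a time fails, because even for finite-rank sets the union of two $\F$-thin sets need not be $\F$-thin (for instance $A\cup g^{-1}A$), so binary unions strictly raise the rank and, iterated, the increments escape $\gamma$. The remedy is to prove the uniform, $N$-fold, block-confined statement $(\dagger_\gamma)$ all at once, so that a \emph{single} application of the hypothesis digests an entire deep intersection rather than one binary union. Getting the ordinal bookkeeping of $(\dagger_\gamma)$ to close --- together with the hypercube pigeonhole behind the $N$-fold form of the ``two levels down'' lemma --- is where essentially all of the work lies, and it is precisely there that torsion-freeness is indispensable.
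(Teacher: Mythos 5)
Your strategy is, in substance, the paper's: the combinatorial heart is the same De Morgan decomposition of an $r$-fold deep intersection of $C=X_1\cup\dots\cup X_N$ into pieces $\bigcap_{x\in 2^r}\mathbf g^xX_{\phi(x)}$ indexed by colourings $\phi:2^r\to\{1,\dots,N\}$, with torsion-freeness used to find two same-coloured cube points with distinct group values, so that each piece lies in a translate of some $X_k\cap wX_k$ with $w\ne e$ and hence drops in rank; and the ordinal bookkeeping likewise amounts to showing that finite unions raise the rank only finitely within each block $[\lambda,\lambda+\w)$. You differ only in the packaging: the paper proves a finite-level quantitative statement relative to an arbitrary left-invariant ideal (Lemmas~\ref{alpha}, \ref{sum}, \ref{l4.3}, with the explicit functions $c(n)$ and $c(n,k)$) and then relativizes it to the ideal $\tau^{<\beta}(\F)$ at limit stages $\beta+\w$, whereas you run one transfinite induction on the rank $\gamma$ with the uniform statement $(\dagger_\gamma)$. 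Your bookkeeping does close: in the successor case the $M(N)$ pieces have rank $\le\gamma-1$ and a single application of $(\dagger_{\gamma-1})$ bounds each deep intersection within the block uniformly in the group elements; in the limit case the finitely many pieces have rank below $\lambda$, so $\lambda$ is a uniform bound; and climbing back up the $r$ levels of the recursion $\rho(Z)=\sup_{g\ne e}(\rho(Z\cap gZ)+1)$ adds only $r$. So this is essentially the same proof organized as one induction instead of two.

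The one genuine gap is the parenthetical justifying the $N$-fold heart. For proper colourings you invoke the claim that torsion-freeness prevents $x\mapsto\mathbf g^x$ from being constant on a sufficiently large colour class. This is a Littlewood--Offord-type assertion about fibres of the cubic map: it is not obvious (fibres can be as large as roughly $2^r/\sqrt r$ even in $\IZ$, and for a general torsion-free group a fibre bound of the required strength would itself need a proof), and it is not quite the right target anyway, since to produce a monochromatic pair with distinct values you must exclude constancy on \emph{all} colour classes simultaneously, i.e.\ bound the size of the \emph{image}, not of a fibre. The elementary fact that does the job is exactly the paper's Lemma~\ref{alpha}: for torsion-free $G$, every cubic $f:2^m\to G$ with $m>(N-1)^2$ satisfies $|f(2^m)|>N$ (if not, the $m$ weight-one vectors take at most $N-1$ values, so some value $y\ne e$ occurs at $\ge N$ coordinates, and then $e,y,y^2,\dots,y^N$ are $N+1$ distinct elements of the image). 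With that image bound, any $N$-colouring of the cube, proper or not, has two same-coloured points with distinct values, and your induction goes through verbatim. Note also that your fully worked $N=2$, depth-$2$ case cannot substitute for the general statement: already $(\dagger_\gamma)$ for $N=2$ calls $(\dagger_{\gamma'})$ for $M(2)=16$ sets at lower ranks, so the heart is needed for all $N$, and it needs the fix above.
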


We define a subset of a group $G$ to be {\em $*$-thin} if its belongs to the thin-completion $\tau^*(\F_G)$ of the family $\F_G$ of all finite subsets of the group $G$. By Proposition 3 of \cite{LP2}, for each countable group $G$ we get  $\tau^*(\F_G)=\tau^{<\w_1}(\F_G)$. It is natural to ask if the equality $\tau^*(\F_G)=\tau^{<\alpha}(\F_G)$ can happen for some cardinal $\alpha<\w_1$. If the group $G$ is Boolean, then the answer is affirmative: $\tau^*(\F)=\tau^1(\F)$ according to Theorem 1 of \cite{LP2}. The situation is different for non-torsion groups: 

\begin{theorem}\label{m2} If an infinite group $G$ contains an abelian torsion-free subgroup $H$ of cardinality $|H|=|G|$, then $\tau^*(\F_G)\ne\tau^{\alpha}(\F_G)\ne\tau^{<\alpha}(\F_G)$ for each ordinal $\alpha<|G|^+$.
\end{theorem}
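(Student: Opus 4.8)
The plan is to derive both strict inequalities from one construction: for every ordinal $\alpha<|G|^+$ I will exhibit a set $A_\alpha\subseteq G$ with $A_\alpha\in\tau^\alpha(\F_G)$ and $A_\alpha\notin\tau^{<\alpha}(\F_G)$. Since the families $\tau^\beta(\F_G)$ are increasing in $\beta$ and $\tau^{<\alpha+1}(\F_G)=\tau^\alpha(\F_G)$, such an $A_\alpha$ already gives $\tau^{<\alpha}(\F_G)\ne\tau^\alpha(\F_G)$; and as $|G|^+$ is a cardinal we have $\alpha+1<|G|^+$, so $A_{\alpha+1}\in\tau^{\alpha+1}(\F_G)\subseteq\tau^*(\F_G)$ while $A_{\alpha+1}\notin\tau^\alpha(\F_G)$, which gives $\tau^\alpha(\F_G)\ne\tau^*(\F_G)$. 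So it suffices, writing $\rho(A)$ for the least ordinal $\beta$ with $A\in\tau^\beta(\F_G)$ (defined for $*$-thin $A$), to realize every $\alpha<|G|^+$ in the form $\rho(A_\alpha)$. I will control ranks through the elementary identity $\rho(A)=\sup\{\rho(A\cap hA)+1:h\in G\setminus\{e\}\}$ for infinite $A$ (and $\rho(A)=0$ for finite $A$), which is immediate from $\tau^\alpha(\F)=\tau(\tau^{<\alpha}(\F))$ and left-invariance; Theorem~\ref{char-t} may be used as a convenient check that the sets produced are indeed $*$-thin.

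Two reductions come first. First, for any subgroup $K\le G$ and any $A\subseteq K$ one has $A\in\tau^\beta(\F_G)\iff A\in\tau^\beta(\F_K)$ for all $\beta$; this is a transfinite induction based on the observation that for $A\subseteq K$ and $x,y\in G$ we have $xA\cap yA=\emptyset$ when $x^{-1}y\notin K$ and $xA\cap yA=x(A\cap hA)$ with $A\cap hA\subseteq K$ when $x^{-1}y=h\in K$, together with left-invariance of each $\tau^{<\beta}(\F_G)$. Hence the $A_\alpha$ may be built inside any convenient subgroup. Second, I choose that subgroup. If $|G|=\kappa>\aleph_0$, the hypothesis forces the torsion-free rank of $H$ to equal $\kappa$ (otherwise $H$ embeds into a $\mathbb Q$-vector space of dimension $<\kappa$ and hence of cardinality $<\kappa$), so $G$ contains a free abelian subgroup $F=\bigoplus_{i<\kappa}\IZ$, in which the coordinate directions are genuinely independent. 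If $\kappa=\aleph_0$, $G$ merely contains a copy of $\IZ$, and here I fix a large base $B$ and restrict attention to integers with base-$B$ digits $<B/2$ supported on a prescribed set of positions; then adding ``single-digit'' elements $B^p$ produces no carries, so $\IZ$ behaves, for the sets under consideration, like $\bigoplus_\w\IZ$. In both cases I have a supply of independent ``directions'' at my disposal and the slogan that disjoint supports add independently.

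The $A_\alpha$ are then defined by transfinite recursion. Put $A_0=\emptyset$. For a successor $\alpha=\beta+1$, pick a direction $g^*$ outside the support of $A_\beta$ and set $A_{\beta+1}=A_\beta\cup(A_\beta+g^*)$. Then $A_{\beta+1}\cap(A_{\beta+1}+g^*)=A_\beta+g^*$, so $\rho(A_{\beta+1})\ge\beta+1$; and for every other nonzero $g$, a short case analysis (according to the coefficient of $g^*$ in $g$) shows that $A_{\beta+1}\cap(A_{\beta+1}+g)$ is finite or equals one of the ``echoed'' sets $X$, $X+g^*$ or $X\cup(X+g^*)$ where $X=A_\beta\cap(A_\beta+g')$ for some $g'$ already handled at stage $\beta$; each of these has rank at most $\beta$, whence $\rho(A_{\beta+1})=\beta+1$. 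For a limit $\alpha$ with $\mathrm{cf}(\alpha)=\lambda\le\kappa$, fix $\beta_\xi\nearrow\alpha$, realize the sets $A_{\beta_\xi+1}$ on pairwise disjoint budgets of directions, and let $A_\alpha$ be their union; a counting of the associated well-founded construction tree (rank $\alpha$, branching $\le\kappa$) shows that $\le\kappa$ directions suffice, so everything fits inside $F$ (resp. inside $\IZ$). Since cross-terms between different blocks are finite while $A_\alpha\cap(A_\alpha+g_\xi)$ recovers a translate of $A_{\beta_\xi}$, we get $\rho(A_\alpha)=\sup_\xi(\beta_\xi+1)=\alpha$.

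The main obstacle is precisely the verification hidden in the last paragraph: one must carry along, as an extra inductive hypothesis, that for every nonzero $g$ the set $A_\alpha\cap(A_\alpha+g)$ is either finite or again a set of the controlled ``structured'' form built at an earlier stage $\delta<\alpha$ (so that its rank is $<\alpha$ and the supremum defining $\rho(A_\alpha)$ does not overshoot). Checking that the echoed intersections arising in the successor step are themselves of this form, and that no unintended coincidences among translates occur — which is where, for $\kappa=\aleph_0$, the digit-boundedness is used to reproduce the genuine independence available automatically when $\kappa>\aleph_0$ — is the technical heart of the proof; Theorem~\ref{ideal} (additivity of $\tau^{<\lambda}(\F)$ for limit $\lambda$) is a handy tool for bounding ranks of the unions that appear along the way.
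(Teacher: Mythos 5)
Your plan has the same architecture as the paper's Section~\ref{s5} proof (successor step: double the set in a ``fresh'' direction; limit step: union of disjointly supported re-realizations; Theorem~\ref{ideal} to control finite unions), but the places you defer are exactly where the paper's work lies, so as it stands the argument has genuine gaps. First, a literal bug: with $A_0=\emptyset$ and $A_{\beta+1}=A_\beta\cup(A_\beta+g^*)$, every $A_\alpha$ is empty (doubling a finite set can never leave $\F_G$), so the recursion must be seeded at stage $1$ with an infinite thin set; this is fixable but shows the bookkeeping is not in place. More seriously, the two upper bounds that drive the induction are asserted, not proved. (a) Successor step: your case analysis reduces everything to the claim that $X\cup(X+g^*)$ has rank at most $\mathrm{rank}(X)+1$ when $g^*$ avoids the support of $X$; this is precisely the paper's Lemma~\ref{union2}, which needs its own transfinite induction (using the invariance machinery of Proposition~\ref{hinv} and the hypothesis $z^2\notin K$), and your closing paragraph explicitly defers the inductive invariant (``structured form'' of all intersections) that would make this close. (b) Limit step: you need both that the union of the re-realized copies lies in $\tau^\alpha(\F_G)$ (the paper's Lemmas~\ref{union} and \ref{l6.7}, which require the disjointness hypothesis $H_nH_m\cap H_kH_l=\{e\}$ or the expanding property, plus additivity) and that each copy keeps its lower bound $A_{\beta_\xi+1}\notin\tau^{\beta_\xi}(\F_G)$ after being moved to a new budget of directions; the latter is an invariance statement (Proposition~\ref{hinv} applied to the strongly invariant ideal $\F_G$) which you never address.

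The countable case is where the gap is most visible. When the torsion-free subgroup has rank $1$ (e.g.\ $G=\IZ$), your base-$B$ digit coding is not a group homomorphism, so rank is not automatically preserved by the coding, and the verification that \emph{every} translation $g\in\IZ$ --- including those that interact with the digit structure through carries and borrows --- yields a finite or structured intersection is exactly the unproved heart of the matter. The paper sidesteps this entirely by replacing ``fresh directions'' with the genuine expanding isomorphism $h:x\mapsto 3x$ (Example before Lemma~\ref{union}, Lemmas~\ref{l6.6} and \ref{l5.8}): since $h$ is a group embedding with $\bigcap_n h^n(G)=\{e\}$, invariance of $\tau^\alpha$ transfers ranks along the copies, and for a fixed $x\ne e$ only finitely many blocks can meet $xA$, after which additivity finishes the limit step. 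So while your outline is a plausible direct-construction variant of the paper's proof, the successor upper bound, the limit upper bound, the rank-invariance under re-embedding, and the entire $\IZ$-coding verification are all missing, and these are not routine details.
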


Theorems~\ref{ideal} and \ref{m2} will be proved in Sections~\ref{s:ideal} and \ref{s5}, respectively. 
In Section~\ref{s6} we shall study the Borel complexity of the family $\tau^*(\F_G)$ for a countable group $G$. In this case the power-set $\mathcal P_G$ carries a natural compact metrizable topology, so we can talk about topological properties of subsets of $\mathcal P_G$. 

\begin{theorem} For a countable group $G$ and a countable ordinal $\alpha$ the subset $\tau^\alpha(\F_G)$ of $\mathcal P_G$ is Borel while $\tau^*(\F_G)=\tau^{<\w_1}(\F_G)$ is coanalytic. If $G$ contains an element of infinite order, then the space $\tau^*(\F_G)$ is coanalytic but not analytic.
\end{theorem}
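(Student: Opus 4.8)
The plan is to prove the four assertions in turn, using Theorem~\ref{char-t} as the main engine. For the statement ``$\tau^\alpha(\F_G)$ is Borel'' I would argue by transfinite induction on the countable ordinal $\alpha$. The base case $\tau^0(\F_G)=\F_G$ is clearly $F_\sigma$ in $\mathcal P_G\cong 2^G$, since it is a countable union of the clopen sets $\{A\in\mathcal P_G: A\subset F\}$ over finite $F\subset G$. For the inductive step, observe that $A\in\tau^\alpha(\F_G)$ iff for every pair of distinct $x,y\in G$ we have $xA\cap yA\in\tau^{<\alpha}(\F_G)$; since $G$ is countable, this is a countable intersection over pairs $(x,y)$, and for each fixed pair the map $A\mapsto xA\cap yA$ is continuous $\mathcal P_G\to\mathcal P_G$, so $\{A: xA\cap yA\in\tau^{<\alpha}(\F_G)\}$ is the preimage of a Borel set (the union $\bigcup_{\beta<\alpha}\tau^\beta(\F_G)$ of countably many Borel sets) under a continuous map, hence Borel. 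A countable intersection of Borel sets is Borel, completing the induction. (One should track the pointclass level to see it stabilizes, but Borelness per se is immediate.)

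For coanalyticity of $\tau^*(\F_G)=\tau^{<\w_1}(\F_G)$, I would use the characterization from Theorem~\ref{char-t}: $A\in\tau^*(\F_G)$ iff for every sequence $(g_n)\in(G\setminus\{e\})^{\IN}$ there is $n\in\w$ with $\bigcap_{k_0,\dots,k_n\in\{0,1\}}g_0^{k_0}\cdots g_n^{k_n}A\in\F_G$. The inner condition ``$\bigcap_{\cdots}g_0^{k_0}\cdots g_n^{k_n}A$ is finite'' is, for fixed $(g_n)$ and fixed $n$, a Borel (indeed $F_\sigma$) condition on $A$; moreover, as a condition on the pair $\big((g_n),A\big)\in (G\setminus\{e\})^{\IN}\times\mathcal P_G$ it is Borel, since the map sending $\big((g_n),A\big)$ to that finite intersection is continuous into $\mathcal P_G$. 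Hence $\{\big((g_n),A\big): \exists n\ (\text{intersection finite})\}$ is Borel, and $\tau^*(\F_G)$ is its coprojection, i.e.\ $\{A:\forall (g_n)\,\exists n\,\dots\}$, which is the complement of the projection of a Borel set and therefore coanalytic ($\mathbf{\Pi}^1_1$). This is the cleanest place to invoke Theorem~\ref{char-t} rather than wrestling directly with the transfinite hierarchy.

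The substantive part — and the main obstacle — is showing that when $G$ has an element of infinite order, $\tau^*(\F_G)$ is \emph{not} analytic, hence properly coanalytic. The natural strategy is a reduction from a known $\mathbf{\Pi}^1_1$-complete set, the set $\WF$ of well-founded trees on $\w$ (equivalently, codes of well-founded relations). I would build a Borel (preferably continuous) map $T\mapsto A_T$ from trees on $\w$ into $\mathcal P_G$ such that $T$ is well-founded iff $A_T\in\tau^*(\F_G)$. The intuition is that the hierarchy rank at which a set enters $\tau^*(\F_G)$ is governed by a well-founded-tree structure, exactly as in Theorem~\ref{char-t} where the ``escape time'' $n$ along each branch $(g_n)$ plays the role of a rank function; a set fails to be $*$-thin precisely when there is an infinite branch witnessing non-finiteness at every level. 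Concretely, fixing an infinite-order element and using the free abelian (or free) structure it generates inside $G$, I would encode each node of the tree $T$ as a ``generator slot'' and arrange the set $A_T$ so that, along a sequence $(g_n)$ coding a branch of $T$, the nested intersections $\bigcap g_0^{k_0}\cdots g_n^{k_n}A_T$ remain infinite as long as the branch stays in $T$, while for sequences eventually leaving $T$ the intersection becomes finite at the corresponding level. Checking that this encoding is well-defined, continuous, and has the stated equivalence — in particular that an \emph{ill}-founded $T$ really produces a non-$*$-thin $A_T$ via Theorem~\ref{char-t}, and that a well-founded $T$ produces a $*$-thin one — is where the real work lies; the torsion-freeness hypothesis is what guarantees the monomial products $g_0^{k_0}\cdots g_n^{k_n}$ are all distinct, so the combinatorics of the nested intersections behave as a genuine binary-tree pattern rather than collapsing. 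Once such a reduction is in place, $\mathbf{\Pi}^1_1$-completeness of $\WF$ forces $\tau^*(\F_G)$ to be non-analytic, and since it is already coanalytic by the previous paragraph, the proof is complete.
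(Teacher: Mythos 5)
Your first two paragraphs are fine: the transfinite induction for Borelness of $\tau^\alpha(\F_G)$ and the ``coprojection of a Borel set'' argument for coanalyticity of $\tau^*(\F_G)$ are correct and amount to the same computation the paper performs via the Borel map $A\mapsto T_A$ into the space $\Tr$ of trees (with $\tau^\alpha(\F)=T_*^{-1}(\WF_{-1+\alpha+1})$ and $\tau^*(\F)=T_*^{-1}(\WF)$). The genuine gap is in the third part, which is the heart of the theorem. You propose to prove non-analyticity by exhibiting a Borel reduction $T\mapsto A_T$ of the set $\WF$ of well-founded trees on $\w$ into $\tau^*(\F_G)$, but you do not construct it: the encoding is described only at the level of intuition, and the two places where all the difficulty sits are exactly the ones you defer --- (i) the quantifier in Theorem~\ref{char-t} ranges over \emph{all} sequences $(g_n)\in(G\setminus\{e\})^{\IN}$, not just those coding branches of $T$, so one must control the nested intersections $\bigcap_{k_0,\dots,k_n}g_0^{k_0}\cdots g_n^{k_n}A_T$ along completely arbitrary sequences (and inside $\IZ$ the products $g_0^{k_0}\cdots g_n^{k_n}$ are subset sums, which collapse easily, so ``all monomials distinct'' is not automatic even in the torsion-free case); and (ii) the construction must be uniform and Borel in $T$, which is much stronger than producing, for each $\alpha<\w_1$ separately, some set in $\tau^{\alpha+1}\setminus\tau^\alpha$. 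Note also that a reduction of the kind you sketch would essentially resolve (a Borel-reducibility version of) the question the authors explicitly leave open, namely whether $\tau^*(\F_\IZ)$ is $\Pi^1_1$-complete in ZFC; so there is no reason to expect it to be routine, and as written your proposal assumes the hardest step.

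The paper takes a different, non-uniform route that you should compare with. It first proves (Theorem~\ref{m2}, via Sections~\ref{s:ideal} and \ref{s5}) that for a countable group containing an element of infinite order the hierarchy never stabilizes: $\tau^*(\F_G)\ne\tau^\alpha(\F_G)$ for all $\alpha<\w_1$. This itself is substantial: it needs the additivity of $\tau^{<\alpha}(\F)$ for torsion-free groups (Theorem~\ref{ideal}), $h$-invariance, and constructions with expanding isomorphisms such as $x\mapsto 3x$ on $\IZ$, producing by transfinite recursion sets of arbitrarily high rank --- but with no uniformity required. Then non-analyticity follows from the boundedness theorem (Theorem~\ref{WF}(4)): if $\tau^*(\F_G)$ were analytic, its image under the Borel map $T_*$ would be an analytic subset of $\WF$, hence of bounded rank $\le\alpha$, forcing $\tau^*(\F_G)=\tau^\alpha(\F_G)$, a contradiction. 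If you want to salvage your write-up along the paper's lines, you must either import Theorem~\ref{m2} (or reprove the non-stabilization of the hierarchy) and add the boundedness argument, or else actually carry out the encoding $T\mapsto A_T$ with full verification of both implications; in its current form the non-analyticity claim is unproved.
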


\section{Preliminaries on well-founded posets and trees}

In this section we collect the neccessary information on well-founded posets and trees. A {\em poset} is an abbreviation from a {\em partially ordered set}. A poset $(X,\le)$ is {\em well-founded} if each subset $A\subset X$ has a maximal element $a\in A$ (this means that each element $x\in A$ with $x\ge a$ is equal to $a$). In a well-founded poset $(X,\le)$ to each point $x\in X$ we can assign the ordinal $\rank_X(x)$ defined by the recursive formula:
$$\rank_X(x)=\sup\{\rank_X(y)+1:y>x\}$$where $\sup\emptyset=0$. Thus maximal elements of $X$ have rank 0, their immediate predecessors 1, and so on. If $X$ is not empty, then the ordinal $\rank(X)=\sup\{\rank_X(x)+1:x\in X\}$ is called the {\em rank} of the poset $X$. In particular, a one-element poset has rank 1. If $X$ is empty, then we put $\rank(X)=0$. \smallskip

A {\em tree}  is a poset $(T,\le)$ with the smallest element $\emptyset_T$ such that for each $t\in T$ the lower set ${\downarrow}t=\{s\in T:s\le t\}$ is well-ordered in the sense that each subset $A\subset{\downarrow}t$ has the smallest element.
A {\em branch} of a tree $T$ is any maximal linearly ordered subset of $T$.
If a tree is well-founded, then all its branches are finite.

 A subset $S\subset T$ of a tree is called a {\em subtree} if it is a tree with respect to the induced partial order. A subtree $S\subset T$ is {\em lower} if $S={\downarrow}S=\{t\in T:\exists s\in S\;\;t\le s\}$.

All trees that appear in this paper are (lower) subtrees of the tree $X^{<\w}=\bigcup_{n\in\w}X^n$ of finite sequences of a set $X$. The tree $X^{<\w}$ carries the following partial order:
 $$(x_0,\dots,x_n)\le (y_0,\dots,y_m) \mbox{ iff $n\le m$ and $x_i=y_i$ for all $i\le n$.}$$ 
The empty sequence $s_\emptyset\in X^0$ is the smallest element (the root) of the tree $X^{<\w}$. For a finite sequence $s=(x_0,\dots,x_n)\in X^{<\w}$ and an element $x\in X$ by $s\hat{\;}x=(x_0,\dots,x_n,x)$ we denote the concatenation of $s$ and $x$. So, $s\hat{\;}x$ is one of $|X|$ many immediate successors of $s$.  The set of all branches of $X^{<\w}$ can be naturally identified with the countable power $X^\w$. For each branch $s=(s_n)_{n\in\w}\in X^\w$ and $n\in\w$ by $s|n=(s_0,\dots,s_{n-1})$ we denote the initial interval of length $n$. 

Let $\Tr\subset \mathcal P_{X^{<\w}}$ denote the family of all lower subtrees of the tree $X^{<\w}$ and $\WF\subset\Tr$ be the subset consisting of all well-founded lower subtrees of $X^{<\w}$.

In Section~\ref{s6} we shall exploit some deep facts about the descriptive properties of the sets $\WF\subset\Tr\subset\mathcal P_{X^{<\w}}$ for a  countable set $X$. In this case the tree $X^{<\w}$ is countable and the power-set $\mathcal P_{X^{<\w}}$ carries a natural compact metrizable topology of the Tychonov power $2^{X^{<\w}}$. 
So, we can speak about topological properties of the subsets $\WF$ and $\Tr$ of the compact metrizable space $\mathcal P_{X^{<\w}}$.

We recall that a topological space $X$ is {\em Polish} if $X$ is homeomorphic to a separable complete metric space. A subset $A$ of a Polish space $X$ is called
\begin{itemize}
\item {\em Borel} if $A$ belongs to the smallest $\sigma$-algebra that contains all open subsets of $X$;
\item {\em analytic} if $A$ is the image of a Polish space $P$ under a continuous map $f:P\to A$;
\item {\em coanalytic} if $X\setminus A$ is analytic.
\end{itemize}
By Souslin's Theorem 14.11 \cite{Ke}, a subset of a Polish space is Borel if and only if it is both analytic and coanalytic. By $\Sigma^1_1$ and $\Pi^1_1$ we denote the classes of spaces homeomorphic to analytic and coanalytic subsets of Polish spaces, respectively.

A coanalytic subset $X$ of a compact metric space $K$ is called {\em $\Pi^1_1$-complete} if  for each coanalytic subset $C$ of the Cantor cube $2^\w$ there is a continuous map $f:2^\w\to K$ such that $f^{-1}(X)=C$. It follows from the existence of a coanalytic non-Borel set in $2^\w$ that each $\Pi^1_1$-complete set $X\subset K$ is non-Borel.  

The following deep theorem is classical and belongs to Lusin, see \cite[32.B and 35.23]{Ke}.

\begin{theorem}\label{WF} Let $X$ be a countable set. 
\begin{enumerate}
\item The subspace $\Tr$ is closed (and hence compact) in $\mathcal P_{X^{<\w}}$.
\item The set of well-founded trees $\WF$ is $\Pi^1_1$-complete in $\Tr$. In particular, $\WF$ is coanalytic but not analytic (and hence not Borel).
\item For each ordinal $\alpha<\w_1$ the subset $\WF_\alpha=\{T\in\WF:\rank(T)\le\alpha\}$ is Borel in $\Tr$.
\item Each analytic subspace of $\WF$ lies in $\WF_\alpha$ for some ordinal $\alpha<\w_1$.
\end{enumerate}
\end{theorem}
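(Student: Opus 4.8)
The plan is to prove the four assertions of Theorem~\ref{WF} in turn, the first three being soft and the last the heart of the matter. For assertion~(1) I would observe that $T\subseteq X^{<\w}$ is a lower subtree precisely when it is downward closed, i.e.\ $s\le t\in T$ implies $s\in T$: a nonempty downward-closed set automatically contains the root, and every lower set ${\downarrow}t$ inside it is a finite chain, hence well-ordered. So $\Tr$ is the intersection, over the countably many pairs $s\le t$ in $X^{<\w}$, of the clopen sets $\{T:t\notin T\text{ or }s\in T\}$, whence $\Tr$ is closed in the compact metrizable space $\mathcal P_{X^{<\w}}=2^{X^{<\w}}$ and therefore compact. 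For the first half of assertion~(2), coanalyticity of $\WF$ is immediate: $\{(T,x)\in\Tr\times X^\w:x|n\in T\text{ for all }n\in\w\}$ is an intersection of clopen sets, hence closed, in the Polish space $\Tr\times X^\w$, and $\Tr\setminus\WF$ is its projection onto $\Tr$, hence analytic.

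For $\Pi^1_1$-hardness of $\WF$ I would, after identifying $X$ with $\w$, fix a coanalytic $C\subseteq 2^\w$, represent its analytic complement by a tree $R$ on $2\times\w$ so that $z\in C$ iff the section $R(z)=\{t\in\w^{<\w}:(z{\restriction}|t|,\,t)\in R\}$ is well-founded, and note that $z\mapsto R(z)$ is a continuous map $f\colon 2^\w\to\Tr$ with $f^{-1}(\WF)=C$; as $C$ was arbitrary, $\WF$ is $\Pi^1_1$-complete. It is then not Borel, since $2^\w$ contains a coanalytic non-Borel set while a continuous preimage of a Borel set is Borel; and, being coanalytic and not Borel, it is not analytic, by Souslin's theorem. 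For assertion~(3) I would use, for each $x\in X$, the continuous $x$-section map sending $T\in\Tr$ to the lower subtree $T_x$ of $T$ hanging below the node $(x)$, reindexed so that $(x)$ becomes the root; for nonempty $T$ one has $\rank(T)=\big(\sup_{x\in X}\rank(T_x)\big)+1$ and $\rank(\emptyset)=0$, giving the identities $\WF_0=\{\emptyset\}$, $\WF_{\alpha+1}=\bigcap_{x\in X}\{T\in\Tr:T_x\in\WF_\alpha\}$, and $\WF_\lambda=\bigcup_{\beta<\lambda}\WF_\beta$ for limit $\lambda$. Since $X$ and every $\lambda<\w_1$ are countable, a transfinite induction shows each $\WF_\alpha$ is Borel: a countable intersection of continuous preimages of a Borel set at successor steps, a countable union at limit steps.

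The substance is assertion~(4), for which I would run the classical $\Sigma^1_1$-boundedness argument. Suppose, towards a contradiction, that some analytic $A\subseteq\WF$ is contained in no $\WF_\alpha$, i.e.\ $\sup_{T\in A}\rank(T)=\w_1$. Write $S\preceq T$ for the existence of a monotone map $h\colon S\to T$, meaning $s<_S s'$ implies $h(s)<_T h(s')$. The comparison lemma to prove is that, \emph{whenever $T$ is well-founded}, $S\preceq T$ holds iff $S$ is well-founded and $\rank(S)\le\rank(T)$ — the forward implication by the rank chase $\rank_S(s)\le\rank_T(h(s))$, the converse by building $h$ recursively on the lengths of the nodes of $S$. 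From this one reads off, for every $S\in\Tr$, the equivalence $S\in\WF$ iff $\exists\,T\in A\ (S\preceq T)$: if $S\in\WF$, unboundedness of $A$ supplies $T\in A$ with $\rank(T)\ge\rank(S)$, hence $S\preceq T$; conversely $S\preceq T$ with $T\in A\subseteq\WF$ forces $S$ well-founded. But the right-hand side is analytic, being the projection onto $\Tr$ of the closed set $\{(S,T,h):h\text{ is a monotone map }S\to T\}\subseteq\Tr\times\Tr\times(X^{<\w})^{X^{<\w}}$ intersected with $\Tr\times A\times(X^{<\w})^{X^{<\w}}$; so $\WF$ would be analytic, contradicting assertion~(2). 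Hence every analytic $A\subseteq\WF$ is contained in some $\WF_\alpha$ with $\alpha<\w_1$.

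The step I expect to be the real obstacle is this last one. One has to hit on a combinatorial surrogate for the relation $\rank(S)\le\rank(T)$ — here the existence of a monotone tree map — that is at once tight (equivalent to the rank inequality whenever $T$ is well-founded) and definable (a closed condition on the triple $(S,T,h)$), and then recognize that an analytic set carrying unbounded ranks would let one express $\WF$ itself as an analytic set, in conflict with the $\Pi^1_1$-completeness established in~(2). Assertions~(1)--(3) are routine once the downward-closed description of $\Tr$, the tree representation of analytic sets, and the section recursion for rank are at hand.
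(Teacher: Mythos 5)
The paper itself gives no proof of this theorem: it is quoted as a classical result of Lusin with a pointer to Kechris (32.B and 35.23), and your argument is a correct reconstruction of the standard proofs of all four parts — closedness of $\Tr$ via downward-closedness, $\Pi^1_1$-completeness of $\WF$ via the tree representation of the analytic complement of a coanalytic $C\subset 2^\w$, Borelness of $\WF_\alpha$ by the section recursion for rank (using that $\rank(T)$ is never a limit ordinal, so $\WF_\lambda=\bigcup_{\beta<\lambda}\WF_\beta$), and part (4) by the $\Sigma^1_1$-boundedness argument with monotone tree maps, where both your comparison lemma and the analyticity of $\{S:\exists T\in A\;(S\preceq T)\}$ check out. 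The one hypothesis worth making explicit is that $X$ must be countably \emph{infinite} for (2) (for finite $X$, K\"onig's lemma makes $\WF$ the Borel family of finite trees); your identification of $X$ with $\w$ silently assumes this, which is harmless here since the paper applies the theorem only to $X=G\setminus\{e\}$ for a countably infinite group $G$.
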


\section{Combinatorial characterization of $*$-thin subsets}\label{s2}

In this section we prove Theorem~\ref{char-t}. Let $\F\subset\mathcal P_G$ be a left-invariant lower family of subsets of a group $G$. Theorem~\ref{char-t} trivially holds if $\F=\mathcal P_G$ (which happens if and only if $G\in\F$). So, it remains to consider the case $G\notin\F$.
Let $e$ be the neutral element of $G$ and $G_\circ=G\setminus\{e\}$.
We shall work with the tree $G_\circ^{<\w}$ discussed in the preceding section.

Let $A$ be any subset of $G$. To each finite sequence $s\in G_\circ^{<\w}$ assign the set $A_s\subset G$, defined by induction: $A_\emptyset=A$ and  $A_{s\hat{\,}x}=A_s\cap xA_s$ and for $s\in G_\circ^{<\w}$ and $x\in G_\circ$. Repeating the inductive argument of the proof of Proposition 2 \cite{LP2}, we can obtaine the following direct description of the sets $A_s$:

\begin{claim} For every sequence $s=(g_0,\dots,g_n)\in G_\circ^{<\w}$ 
$$A_s=\bigcap_{k_0,\dots,k_n\in\{0,1\}}g_0^{k_0}\cdots g_n^{k_n}A.$$
\end{claim}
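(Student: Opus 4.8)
The plan is to prove the Claim by a straightforward induction on the length of the sequence $s\in G_\circ^{<\w}$; this is exactly the inductive bookkeeping behind Proposition~2 of \cite{LP2}, so the only real task is to set it up cleanly.

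First I would handle the base of the induction. For the empty sequence $s=\emptyset$ we have $A_\emptyset=A$ by definition, whereas the right-hand side is the intersection over the empty index set, whose unique term corresponds to the empty product $e$ and therefore equals $eA=A$; so the two sides coincide. (If one prefers to start the induction at length one, then for $s=(g_0)$ the definition gives $A_{(g_0)}=A_\emptyset\cap g_0A_\emptyset=A\cap g_0A$, which is literally $\bigcap_{k_0\in\{0,1\}}g_0^{k_0}A$.)

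For the inductive step I would write $s=s'\hat{\,}x$ with $s'=(g_0,\dots,g_n)$ and $x=g_{n+1}\in G_\circ$, so that $A_s=A_{s'}\cap xA_{s'}$ by the recursive definition of the sets $A_s$. Substituting the induction hypothesis $A_{s'}=\bigcap_{(k_0,\dots,k_n)\in\{0,1\}^{n+1}}g_0^{k_0}\cdots g_n^{k_n}A$ and using that a left translation of $G$ is a bijection, hence distributes over arbitrary intersections, the set $xA_{s'}$ becomes an intersection of terms of the same shape carrying an extra translation factor. One then recognises $A_{s'}$ and $xA_{s'}$ as, respectively, the subfamily of the $\{0,1\}^{n+2}$-indexed terms $g_0^{k_0}\cdots g_{n+1}^{k_{n+1}}A$ with $k_{n+1}=0$ and the subfamily with $k_{n+1}=1$; intersecting over the whole cube $\{0,1\}^{n+2}$ yields the formula for $s$ and closes the induction.

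The only point demanding care --- and the part I would write out most carefully --- is the reindexing of the Boolean exponents in the inductive step: one has to match the newly appended generator $g_{n+1}$ with the fresh coordinate $k_{n+1}$ and keep the factors of the product in the intended order. I do not expect any genuine obstacle here, since the sole group-theoretic fact used is that translations are bijections. Note finally that $\F$ itself is irrelevant to the Claim, which is a purely combinatorial identity about the auxiliary sets $A_s$; no hypothesis on $\F$ --- not even that it is lower or left-invariant --- is needed for its proof.
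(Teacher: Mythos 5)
Your overall strategy --- induction on the length of $s$, exactly the ``inductive argument of Proposition 2 of \cite{LP2}'' that the paper invokes --- is the right one and is the same as the paper's. But the step you yourself flagged as the delicate one is where the argument, as written, fails for a non-abelian $G$. From the recursion $A_{s\hat{\,}x}=A_s\cap xA_s$ the new translation lands on the \emph{left}: with $s'=(g_0,\dots,g_n)$ and $x=g_{n+1}$, the inductive hypothesis gives
$$xA_{s'}=\bigcap_{k_0,\dots,k_n\in\{0,1\}}g_{n+1}\,g_0^{k_0}\cdots g_n^{k_n}A,$$
whereas the $k_{n+1}=1$ slice of the displayed formula consists of the sets $g_0^{k_0}\cdots g_n^{k_n}g_{n+1}A$. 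Identifying the two silently commutes $g_{n+1}$ past $g_0^{k_0}\cdots g_n^{k_n}$, which is not available in a general group (and the Claim is stated for an arbitrary $G$; already for $s=(g_0,g_1)$ the recursion yields $A\cap g_0A\cap g_1A\cap g_1g_0A$, while the display reads $A\cap g_0A\cap g_1A\cap g_0g_1A$). So bijectivity of translations is not the only group-theoretic input; what your induction actually proves verbatim is the closed form with the product reversed,
$$A_{(g_0,\dots,g_n)}=\bigcap_{k_0,\dots,k_n\in\{0,1\}}g_n^{k_n}\cdots g_0^{k_0}A.$$

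This is a small but genuine gap in your write-up (and, to be fair, a looseness in the paper's own display): to close it you should either prove the reversed-product identity and then remark that replacing $(g_0,\dots,g_n)$ by $(g_n,\dots,g_0)$ shows the two families of sets coincide --- which is all that is used later, since every application quantifies over all sequences in $G_\circ^{<\w}$ --- or restate the Claim with the factors in the order the recursion produces. Your base case and the observation that no hypothesis on $\F$ is needed are both fine.
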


The set $$T_A=\{s\in G_\circ^{<\w}:A_s\notin\F\}$$ is a subtree of $G_\circ^{<\w}$ called the {\em $\tau$-tree} of the set $A$. 

For a non-zero ordinal $\alpha$ let $-1+\alpha$ be a unique ordinal $\beta$ such that $1+\beta=\alpha$. For $\alpha=0$ we put $-1+\alpha=-1$. It follows that $-1+\alpha=\alpha$ for each infinite ordinal $\alpha$.

\begin{theorem}\label{t1.2} A set $A\subset G$ belongs to the family $\tau^{\alpha}(\F)$
for some ordinal $\alpha$ if and only if its $\tau$-tree $T_A$ is well-founded and has $\rank(T_A)\le-1+\alpha+1$.
\end{theorem}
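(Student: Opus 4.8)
The plan is to prove Theorem~\ref{t1.2} by induction on $\alpha$, relating membership in $\tau^\alpha(\F)$ to bounds on $\rank(T_A)$. First I would unwind the definitions: by the Claim, the set $A_{s\hat{\,}x}$ for $s=(g_0,\dots,g_n)$ and $x=g_{n+1}$ equals $\bigcap_{k_0,\dots,k_{n+1}}g_0^{k_0}\cdots g_{n+1}^{k_{n+1}}A$, so the immediate successors of the root $s_\emptyset$ in $T_A$ are exactly the one-element sequences $(x)$ with $A\cap xA=A_{(x)}\notin\F$. The crucial observation linking the recursion to the operator $\tau$ is that for a fixed $x\in G_\circ$, writing $B=A\cap xA$, the subtree of $T_A$ hanging below the vertex $(x)$ is naturally order-isomorphic to the $\tau$-tree $T_B$ of $B$ \emph{computed relative to a shifted enumeration}; more precisely $A_{(x)\hat{\,}t}=B_t$ for every $t\in G_\circ^{<\w}$, which is an immediate induction from the recursion $A_{s\hat{\,}x}=A_s\cap xA_s$. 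Hence $\rank_{T_A}((x))=\rank(T_{A\cap xA})\dot{-}1$ (with the convention that the rank of an empty tree is $0$ and we read off the rank of the vertex accordingly), and so $\rank(T_A)=\sup_{x\in G_\circ}\bigl(\rank(T_{A\cap xA})+1\bigr)$ whenever $A\notin\F$, while $T_A=\emptyset$ exactly when $A\in\F$.

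Next I would run the induction. For $\alpha=0$: $A\in\tau^0(\F)=\F$ iff $A_\emptyset=A\in\F$ iff $T_A=\emptyset$ iff $\rank(T_A)=0=-1+0+1$, giving the base case (here $-1+\alpha+1$ is read as $-1+1=0$ via the stated convention $-1+0=-1$). For the successor/limit step, suppose the equivalence holds for all $\beta<\alpha$. If $A\in\tau^\alpha(\F)=\tau(\tau^{<\alpha}(\F))$, then for every two distinct $x,y\in G$ we have $xA\cap yA\in\tau^{<\alpha}(\F)$; applying left-invariance and passing to $A\cap x^{-1}yA$ (which lies in $\tau^{<\alpha}(\F)$ since this family is left-invariant), we get that $A\cap zA\in\tau^{\beta_z}(\F)$ for some $\beta_z<\alpha$, for every $z\in G_\circ$. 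By the inductive hypothesis $\rank(T_{A\cap zA})\le -1+\beta_z+1\le -1+\alpha$ (using $\beta_z<\alpha$ and elementary ordinal arithmetic), so $T_{A\cap zA}$ is well-founded; if $A\notin\F$ then the displayed supremum formula gives $\rank(T_A)=\sup_z(\rank(T_{A\cap zA})+1)\le (-1+\alpha)+1=-1+\alpha+1$, and if $A\in\F$ then $\rank(T_A)=0$, which is also $\le -1+\alpha+1$. Conversely, if $T_A$ is well-founded with $\rank(T_A)\le -1+\alpha+1$, then for each $z\in G_\circ$ the formula gives $\rank(T_{A\cap zA})+1\le -1+\alpha+1$, i.e. $\rank(T_{A\cap zA})\le -1+\alpha$, and one checks that this bound can be rewritten as $-1+\beta_z+1$ for a suitable $\beta_z<\alpha$ (for $\alpha$ a successor $\gamma+1$ take $\beta_z=\gamma$; for $\alpha$ a limit this uses that $-1+\alpha=\sup_{\beta<\alpha}(-1+\beta)$ is a limit, so any ordinal strictly below it is $-1+\beta$ for some $\beta<\alpha$ — and one must separately handle the case where $T_{A\cap zA}$ is empty, i.e. $A\cap zA\in\F=\tau^0(\F)$). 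Then by the inductive hypothesis $A\cap zA\in\tau^{\beta_z}(\F)\subset\tau^{<\alpha}(\F)$ for all $z$, hence $xA\cap yA=x(A\cap x^{-1}yA)\in\tau^{<\alpha}(\F)$ for all distinct $x,y$, i.e. $A\in\tau^\alpha(\F)$.

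The main obstacle I anticipate is the bookkeeping with the truncated ordinal subtraction $-1+\alpha+1$ and its interaction with the off-by-one shift between the rank of a vertex and the rank of the subtree below it: the map $\alpha\mapsto -1+\alpha+1$ is the identity on successor ordinals but increments limit ordinals by $1$, so one has to be careful that the equivalence $\rank(T_{A\cap zA})\le -1+\alpha$ is genuinely the same as ``$A\cap zA\in\tau^\beta(\F)$ for some $\beta<\alpha$'' in all three cases ($\alpha=0$, $\alpha$ a successor, $\alpha$ a limit), and in particular that the empty-tree case is not lost. A clean way to organize this is to first prove the rank-recursion formula $\rank(T_A)=0$ if $A\in\F$ and $\rank(T_A)=\sup_{z\in G_\circ}(\rank(T_{A\cap zA})+1)$ otherwise as a standalone lemma (it is immediate from the vertex-rank formula and the definition of $\rank$ of a poset), and then the ordinal arithmetic reduces to the elementary facts that $\beta<\alpha \Leftrightarrow -1+\beta < -1+\alpha$ together with $-1+\alpha+1 = \sup\{(-1+\beta)+1 : \beta<\alpha\}\cup\{0\}$ wait — more simply, that $\{\,\rank(T)+1 : \rank(T)\le -1+\beta+1,\ \beta<\alpha\,\}$ has supremum $-1+\alpha+1$; the reader can verify this by splitting on the parity (successor versus limit) of $\alpha$.
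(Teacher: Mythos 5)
Your overall strategy is the same as the paper's (induction on $\alpha$, the isomorphism between the part of $T_A$ above a length-one vertex $(x)$ and $T_{A\cap xA}$, and the $-1+\alpha+1$ bookkeeping), but the ``standalone lemma'' you propose to build the induction on is false as stated. From the vertex-rank formula one gets, for $A\notin\F$,
$$\rank(T_A)=\rank_{T_A}(s_\emptyset)+1=\Bigl(\sup_{x\in G_\circ}\rank(T_{A\cap xA})\Bigr)+1,$$
with the $+1$ \emph{outside} the supremum, not $\sup_{x\in G_\circ}\bigl(\rank(T_{A\cap xA})+1\bigr)$ as you wrote; the two differ exactly when the supremum is not attained. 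For instance, if the ranks $\rank(T_{A\cap xA})$, $x\in G_\circ$, are finite but unbounded (which genuinely happens, e.g.\ for any $A\in\tau^{\w}(\F)\setminus\tau^{<\w}(\F)$), the true rank of $T_A$ is $\w+1$ while your formula yields $\w$. Your forward step uses the false half of this identity, $\rank(T_A)\le\sup_x(\rank(T_{A\cap xA})+1)$; the conclusion $\rank(T_A)\le-1+\alpha+1$ is still true, since it follows from the correct identity once each $\rank(T_{A\cap xA})\le-1+\alpha$, but your derivation of it is not. The same off-by-one infects your closing claim that $\{\rank(T)+1:\rank(T)\le-1+\beta+1,\ \beta<\alpha\}$ has supremum $-1+\alpha+1$: for limit $\alpha$ that supremum is $-1+\alpha$, not $-1+\alpha+1$.

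There is a second, related gap in the converse direction at limit $\alpha$: you derive only $\rank(T_{A\cap zA})\le-1+\alpha$ and then invoke ``any ordinal strictly below $-1+\alpha$ is $-1+\beta$ for some $\beta<\alpha$,'' which silently omits the boundary case $\rank(T_{A\cap zA})=-1+\alpha$; if that case could occur, no $\beta_z<\alpha$ would satisfy $\rank(T_{A\cap zA})\le-1+\beta_z+1$ and the step would collapse. It cannot occur, but only because the rank of a nonempty rooted tree is always a successor ordinal (namely $\rank_T(\emptyset_T)+1$), hence never equals the limit ordinal $-1+\alpha$ --- an observation you never make. The paper sidesteps both issues by doing the bookkeeping with vertex ranks: it defines $\beta_x$ by $-1+\beta_x=\rank_{T_A}(x)$ and uses $\rank_{T_A}(x)+2\le\rank_{T_A}(s_\emptyset)+1=\rank(T_A)\le-1+\alpha+1$ to conclude $\beta_x<\alpha$ uniformly in $\alpha$. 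If you restate your recursion with the supremum in the right place and add the successor-rank observation (or switch to vertex ranks as the paper does), your argument goes through.
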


\begin{proof} By induction on $\alpha$. Observe that $A\in\tau^{0}(\F)=\F$ if and only if $T_A=\emptyset$ if and only if $\rank(T_A)=0=1+0+1$. So, Theorem holds for $\alpha=0$. 

Assume that for some ordinal $\alpha>0$ and any ordinal $\beta<\alpha$ we know that a set $A\subset G$ belongs to $\tau^{\beta}(G)$ if and only if $T_A$ is a well-founded tree with $\rank(T_A)\le-1+\beta+1$. Given a subset $A\subset G$ we should check that  that $A\in\tau^{\alpha}(\F)$ if and only if its $\tau$-tree $T_A$ is well-founded and has $\rank(T_A)\le-1+\alpha+1$.

First assume that $A\in\tau^{\alpha}(\F)$. Then for every $x\in G_\circ$ the set $A\cap xA$ belongs to $\tau^{\beta_x}(\F)\subset\tau^{<\alpha}(\F)$ for some ordinal $\beta_x<\alpha$. By the inductive assumption, the $\tau$-tree $T_{A\cap xA}$ is well-founded and has $\rank(T_{A\cap xA})\le-1+\beta_x+1$.

If $A\in\tau(\F)$, then $T_A\subset\{s_\emptyset\}$ and $\rank(T_A)\le 1\le-1+\alpha+1$. So, we can assume that $A\notin\tau(\F)$. In this case each point $x\in G_\circ=G_\circ^1$ considered as the sequence $(x)\in G^1$ of length 1 belongs to the $\tau$-tree $T_A$ of the set $A$. So we can consider the upper set $T_A(x)=\{s\in T_A:s\ge x\}$ and observe that the subtree $T_A(x)$ of $T_A$ is isomorphic to the $\tau$-tree $T_{A\cap xA}$ of the set $A\cap xA$ and hence $\rank(T_A(x))=\rank(T_{A\cap xA})\le-1+\beta_x+1$.
It follows that $$
\begin{aligned}
\rank(T_A)&=\rank_{T_A}(s_\emptyset)+1=\big(\sup_{x\in G_\circ}(\rank_{T_A}(x)+1)\big)+1=\\
&=\big(\sup_{x\in G_\circ}\rank\,T_A(x)\big)+1\le\big(\sup_{x\in G_\circ}(-1+\beta_x+1\big)\big)+1\le -1+\alpha+1.
\end{aligned}
$$
\smallskip

Now assume conversely that the $\tau$-tree $T_A$ of $A$ is well-founded and has $\rank(T_A)\le-1+\alpha+1$. For each $x\in G_\circ$, find a unique ordinal $\beta_x$ such that $-1+\beta_x=\rank_{T_A}(x)$. It follows from 
$$-1+\beta_x+2=\rank_{T_A}(x)+2\le\rank_{T_A}(s_\emptyset)+1=\rank(T_A)\le-1+\alpha+1$$that $\beta_x<\alpha$. Since the subtree $T_A(x)=T_A\cap{\uparrow}x$ is isomorphic to the $\tau$-tree $T_{A\cap xA}$ of the set $A\cap xA$, we conclude that $T_{A\cap xA}$ is well-founded and has $\rank(T_{A\cap xA})=\rank(T_A(x))=\rank_{T_A}(x)+1=-1+\beta_x+1$. Then the inductive assumption guarantees that $A\cap xA\in\tau^{\beta_x}(\F)\subset\tau^{<\alpha}(\F)$ and hence  $A\in\tau^{\alpha}(\F)$ by the definition of the family $\tau^\alpha(\F)$.
\end{proof}  

As a corollary of Theorem~\ref{t1.2}, we obtain the following characterization proved in \cite{LP2}:

\begin{corollary}\label{tau-n} A subset $A\subset G$ belongs to the family $\tau^n(\F)$ for some $n\in\w$ if and only if for each sequence $(g_i)_{i=0}^n\in G_\circ^{n+1}$ we get $$\bigcap\limits_{k_0,\dots,k_n\in\{0,1\}}g_0^{k_0}\cdots g_n^{k_n}A\in\F.$$
\end{corollary}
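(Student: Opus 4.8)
The plan is to prove Corollary~\ref{tau-n} as a straightforward finitary specialization of Theorem~\ref{t1.2}, combined with the explicit formula for $A_s$ from the Claim. First I would recall that, by the Claim, for a sequence $s=(g_0,\dots,g_n)\in G_\circ^{n+1}$ the set $A_s$ is exactly $\bigcap_{k_0,\dots,k_n\in\{0,1\}}g_0^{k_0}\cdots g_n^{k_n}A$, so the hypothesis ``$\bigcap_{k_0,\dots,k_n}g_0^{k_0}\cdots g_n^{k_n}A\in\F$ for all $(g_i)_{i=0}^n\in G_\circ^{n+1}$'' says precisely that no sequence of length $n+1$ lies in the $\tau$-tree $T_A=\{s\in G_\circ^{<\w}:A_s\notin\F\}$. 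Since $T_A$ is a lower subtree of $G_\circ^{<\w}$, this is equivalent to $T_A\subset G_\circ^{\le n}=\bigcup_{k\le n}G_\circ^k$, i.e. every branch of $T_A$ has length at most $n$.

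Next I would translate this length bound into the rank statement demanded by Theorem~\ref{t1.2}. If $A\in\F=\tau^0(\F)$, then $T_A=\emptyset$ and the displayed intersection condition holds vacuously for $n=0$ already; so assume $A\notin\F$, whence $s_\emptyset\in T_A$. The elementary fact I need is: for a nonempty well-founded lower subtree $T$ of $X^{<\w}$, one has $\rank(T)\le n+1$ if and only if $T\subset X^{\le n}$. This is proved by an easy induction on $n$ using $\rank_T(s_\emptyset)=\sup_{x}(\rank_T(x)+1)$ and the isomorphism between the upper cone at $x$ and a tree of sequences shifted down by one level; I would state it as a one-line observation rather than belabor it. Granting this, ``every branch of $T_A$ has length $\le n$'' is equivalent to ``$T_A$ is well-founded with $\rank(T_A)\le n+1$'', which for the finite ordinal $\alpha=n$ is exactly ``$\rank(T_A)\le -1+n+1=n+1$'' when $n\ge 1$, and the $n=0$ case is the boundary case $T_A\subset\{s_\emptyset\}$ handled directly.

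Finally I would assemble the pieces: $A\in\tau^n(\F)$ for some $n\in\w$ $\iff$ (by Theorem~\ref{t1.2}) $T_A$ is well-founded with $\rank(T_A)\le -1+n+1$ for some $n\in\w$ $\iff$ $T_A\subset G_\circ^{\le n}$ for some $n$ $\iff$ (by the Claim) for some $n$ the displayed intersection lies in $\F$ for every $(g_i)_{i=0}^n\in G_\circ^{n+1}$. The only mild subtlety — and the one point worth writing out — is the off-by-one bookkeeping with $-1+\alpha+1$ around the small values $\alpha=0,1$ and the degenerate cases $G\in\F$ (where everything is trivial) and $A\in\F$; none of this is a genuine obstacle, just a matter of stating the tree-height lemma cleanly and checking the base case. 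I expect no real difficulty here, since all the substantive work has already been done in Theorem~\ref{t1.2}.
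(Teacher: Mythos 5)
Your overall route is the same as the paper's: Corollary~\ref{tau-n} is meant to be an immediate specialization of Theorem~\ref{t1.2}, obtained by combining the Claim (which identifies the displayed intersection with $A_s$), the observation that the hypothesis says $T_A$ contains no sequence of a given length, and the elementary translation between the height of a lower subtree of $G_\circ^{<\w}$ and its rank. All of those ingredients in your write-up are fine, including the lemma ``$T\subset X^{\le n}$ iff $\rank(T)\le n+1$'' for nonempty well-founded lower trees.

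The problem is exactly at the point you yourself single out as the only subtlety: the arithmetic with $-1+\alpha+1$. By the paper's convention, $-1+n$ is the ordinal $\beta$ with $1+\beta=n$, so for finite $n\ge 1$ one has $-1+n+1=n$, \emph{not} $n+1$ (and $-1+0+1=0$). Hence Theorem~\ref{t1.2} for $\alpha=n$ gives $A\in\tau^n(\F)$ iff $\rank(T_A)\le n$, i.e.\ iff $T_A\subset G_\circ^{\le n-1}$, i.e.\ iff $A_s\in\F$ for every $s\in G_\circ^{n}$ — the quantification should run over sequences $(g_i)_{i=0}^{n-1}$ of length $n$. Your chain reaches the printed statement (length $n+1$) only because the false identity $-1+n+1=n+1$ compensates for this shift; done correctly, the displayed condition over $G_\circ^{n+1}$ characterizes $\tau^{n+1}(\F)$, not $\tau^n(\F)$ (already for $n=0$: the condition $A\cap g_0A\in\F$ for all $g_0$ defines $\tau(\F)$, not $\F$, and for $n=1$ any $A\in\tau^2(\F)\setminus\tau^1(\F)$ satisfies the length-$2$ condition). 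So either you read the corollary as the ``for some $n$'' statement (where the shift is harmless and your argument, with the arithmetic corrected, goes through), or, if it is read as a per-$n$ equivalence in the paper's own indexing of $\tau^n(\F)$, you should note the index shift explicitly rather than absorb it into $-1+n+1$. Fix that bookkeeping and the proof is complete and matches the intended derivation from Theorem~\ref{t1.2}.
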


Theorem~\ref{t1.2} also implies the following explicit description of the family $\tau^*(\F)$, which was announced in Theorem~\ref{char-t}:

\begin{corollary}\label{c3.4} For a subset $A\subset G$ the following conditions are equivalent:
\begin{enumerate}
\item $A\in\tau^*(\F)$;
\item the $\tau$-tree $T_A$ of $A$ is well-founded;
\item for each sequence $(g_n)_{n\in\w}\in G_\circ^\w$ there is $n\in\w$ such that $(g_0,\dots,g_n)\notin T_A$;
\item  for each sequence $(g_n)_{n\in\w}\in G_\circ^\w$ there is $n\in\w$ such that $$\bigcap\limits_{k_0,\dots,k_n\in\{0,1\}}g_0^{k_0}\cdots g_n^{k_n}A\in\F.$$
\end{enumerate}
\end{corollary}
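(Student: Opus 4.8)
The plan is to deduce all four equivalences from Theorem~\ref{t1.2}, the definition of the $\tau$-tree $T_A$, the Claim, and the identity $\tau^*(\F)=\bigcup_{\alpha}\tau^\alpha(\F)$ furnished by Proposition~3 of \cite{LP2}. I would organize the argument as the cycle $(1)\Leftrightarrow(2)\Leftrightarrow(3)\Leftrightarrow(4)$, noting in advance that $(3)$ and $(4)$ are the very same statement once the definition of $T_A$ is unfolded, so that the only substantive work lies in $(1)\Leftrightarrow(2)$.

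For $(1)\Leftrightarrow(2)$ I would argue as follows. By Proposition~3 of \cite{LP2}, $A\in\tau^*(\F)$ holds iff $A\in\tau^\alpha(\F)$ for some ordinal $\alpha$. If this is the case, Theorem~\ref{t1.2} immediately tells us that $T_A$ is well-founded, giving $(2)$. Conversely, suppose $T_A$ is well-founded, so it has an ordinal rank $\rho=\rank(T_A)$. Since $-1+\alpha+1\ge\alpha$ for every ordinal $\alpha$ (as one checks directly from the definition of $-1+\alpha$, distinguishing the cases $\alpha=0$, $\alpha$ finite, and $\alpha$ infinite), the choice $\alpha=\rho$ satisfies $\rank(T_A)\le-1+\alpha+1$. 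Theorem~\ref{t1.2} then yields $A\in\tau^\alpha(\F)\subset\tau^*(\F)$, which is $(1)$.

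The equivalence $(2)\Leftrightarrow(3)$ I would obtain from the standard fact that a lower subtree of $G_\circ^{<\w}$ is well-founded if and only if it contains no infinite branch. An infinite branch of $T_A$ is exactly a sequence $(g_n)_{n\in\w}\in G_\circ^\w$ all of whose initial segments $(g_0,\dots,g_n)$ belong to $T_A$; negating the existence of such a branch gives precisely condition $(3)$.

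Finally, $(3)\Leftrightarrow(4)$ is pure rewriting: by the definition of $T_A$, the relation $(g_0,\dots,g_n)\notin T_A$ means $A_{(g_0,\dots,g_n)}\in\F$, and by the Claim $A_{(g_0,\dots,g_n)}$ equals $\bigcap_{k_0,\dots,k_n\in\{0,1\}}g_0^{k_0}\cdots g_n^{k_n}A$, so the two formulations coincide. I do not anticipate any genuine obstacle here: since Theorem~\ref{t1.2} already carries the inductive weight of the result, what remains is the elementary ordinal bookkeeping in $(1)\Leftrightarrow(2)$ and the routine translation between trees, branches, and the sets $A_s$.
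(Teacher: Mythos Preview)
Your proposal is correct and matches the paper's intent: the paper gives no explicit proof of Corollary~\ref{c3.4}, merely introducing it with ``Theorem~\ref{t1.2} also implies the following explicit description of the family $\tau^*(\F)$'', and your derivation from Theorem~\ref{t1.2}, Proposition~3 of \cite{LP2}, the definition of $T_A$, and the Claim is precisely the intended unpacking. The ordinal check $-1+\alpha+1\ge\alpha$ and the branch characterization of well-foundedness are exactly the routine steps the paper leaves to the reader.
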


\section{The additivity of the families $\tau^{<\alpha}(\F)$}\label{s:ideal}

In this section we shall prove Theorem~\ref{ideal}. 
Let $G$ be an infinite group and $e$ be the neutral element of $G$. 

 For a natural number $m$ let $2^m$ denote the finite cube $\{0,1\}^m$. For vectors $\mathbf g=(g_1,\dots,g_m)\in (G\setminus\{e\})^m$ and $\mathbf x=(x_1,\dots,x_m)\in 2^m$ let $$\mathbf g^{\mathbf x}=g_1^{x_1}\cdots g_m^{x_m}\in G.$$ 

A function $f:2^m\to G$ to a group $G$ will be called {\em cubic} if there is a vector $\mathbf g=(g_1,\dots,g_m)\in (G\setminus\{e\})^m$ such that $f(x)=\mathbf g^x$ for all $x\in 2^m$.   

\begin{lemma}\label{alpha} If the group $G$ is torsion-free, then for every $n\in\IN$, $m>(n-1)^2$, and a cubic function $f:2^{m}\to G$ we get $|f(2^{m})|>n$.
\end{lemma}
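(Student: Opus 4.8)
The plan is to argue by contradiction: I would assume $|f(2^{m})|\le n$ and show that then $m\le (n-1)^2$, contrary to the hypothesis. Write the cubic function as $f(\mathbf x)=\mathbf g^{\mathbf x}=g_1^{x_1}\cdots g_m^{x_m}$ with $\mathbf g=(g_1,\dots,g_m)\in(G\setminus\{e\})^{m}$. Evaluating $f$ at the zero vector gives $e\in f(2^{m})$, and evaluating it at the vector $\mathbf e_i$ having a single $1$ in coordinate $i$ gives $g_i=f(\mathbf e_i)\in f(2^{m})$. Since $g_i\ne e$, each $g_i$ therefore lies in $f(2^{m})\setminus\{e\}$, a set of at most $n-1$ elements under our assumption; that is, the word $g_1,\dots,g_m$ uses at most $n-1$ distinct letters.

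Next comes the combinatorial heart of the argument, a single pigeonhole step whose bound $(n-1)^2$ is the product of two bounds each equal to $n-1$. If every element of $f(2^{m})\setminus\{e\}$ occurred among $g_1,\dots,g_m$ at most $n-1$ times, then $m$ would be at most $(n-1)\cdot|f(2^{m})\setminus\{e\}|\le(n-1)^2$. Since $m>(n-1)^2$, some $v\in f(2^{m})\setminus\{e\}$ must occur at least $n$ times, say $g_{i_1}=\dots=g_{i_n}=v$ with $i_1<\dots<i_n$. The fact that the indices are increasing is exactly what lets one read powers of $v$ off the cube: for each $k\in\{0,\dots,n\}$ let $\mathbf x^{(k)}\in 2^{m}$ be the indicator vector of $\{i_1,\dots,i_k\}$; then $f(\mathbf x^{(k)})$ is the product of the factors $g_i$ over $i\in\{i_1,\dots,i_k\}$ taken in increasing order of $i$, so $f(\mathbf x^{(k)})=g_{i_1}\cdots g_{i_k}=v^{k}$.

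Finally I would invoke torsion-freeness. Since $v\ne e$, the element $v$ has infinite order, so $v^{0}=e,v^{1},\dots,v^{n}$ are $n+1$ pairwise distinct elements of $G$, and by the previous paragraph all of them lie in $f(2^{m})$. This forces $|f(2^{m})|\ge n+1$, contradicting $|f(2^{m})|\le n$ and completing the proof. The step I see as the main obstacle is \emph{choosing the right combinatorial invariant}: it is tempting to track the partial products $g_1\cdots g_k$, but these can take as few as two values (let $g_i$ be alternately $a$ and $a^{-1}$) while $|f(2^{m})|$ stays infinite, so partial products cannot bound $m$. Isolating instead a single letter $v$ repeated at least $n$ times, and using that a cubic function evaluated on an increasing-support indicator is an \emph{ordered} product of the corresponding $g_i$, is what makes torsion-freeness bite and yields exactly the quadratic bound $(n-1)^2$.
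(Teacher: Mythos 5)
Your proof is correct and follows essentially the same route as the paper: the authors also assume $|f(2^m)|\le n$, apply the pigeonhole principle to the weight-one vectors (equivalently, to the letters $g_i$, all lying in $f(2^m)\setminus\{e\}$, a set of size at most $n-1$) to find a value $y$ repeated at least $n$ times, and then read off the distinct powers $e,y,\dots,y^n$ inside $f(2^m)$ using torsion-freeness. The only difference is presentational: they phrase the pigeonhole step via the set $B$ of weight-one vectors of cardinality $m>(n-1)^2$, while you count multiplicities of letters directly.
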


\begin{proof} Assume conversely that $|f(2^m)|\le n$. Consider the set $B=\{(k_1,\dots,k_m)\in 2^m:\sum_{i=1}^mk_i=1\}$ having cardinality $|B|=m>(n-1)^2$. Since $e\notin f(B)$, we conclude that $|f(B)|\le |f(2^m)|-1\le n-1$ and hence $|f^{-1}(y)|\ge n$ for some $y\in f(B)$. Let $B_y=f^{-1}(y)$ and observe that $f(2^m)\supset \{e,y,y^2,\dots,y^{|B_y|}\}$ and thus $|f(2^m)|\ge |B_y|+1\ge n+1$, which contradicts our assumption.
\end{proof}

For every $n\in\IN$ let $c(n)$ be the smallest number $m\in\IN$ such that for each cubic function $f:2^m\to G$ we get $|f(2^m)|>n$. It is easy to see that $c(n)\ge n$. On the other hand,  Lemma~\ref{alpha} implies that $c(n)\le (n-1)^2+1$ if $G$ is torsion-free. 

For a family $\F$ and a natural number $n\in\IN$, let
$$\bigvee_n\F=\{\cup\A:\A\subset\F,\;|\A|\le n\}.$$

\begin{lemma}\label{sum} Let $\F\subset\mathcal P_G$ be a left-invariant lower family of subsets in a torsion-free group $G$. For every $n\in\IN$ we get
$$\bigvee_n\tau(\F)\subset \tau^{c(n)-1}(\bigvee_{m}\F)$$where $m=n^{2^{c(n)}}$.
\end{lemma}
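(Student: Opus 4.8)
The plan is to translate everything into the combinatorial language of Corollary~\ref{tau-n}. Write $c=c(n)$ and $m=n^{2^{c}}$, and fix a set $A=A_1\cup\dots\cup A_n$ with each $A_i\in\tau(\F)$. The family $\bigvee_m\F$ is again left-invariant and lower, so Corollary~\ref{tau-n} applies to it, and the membership $A\in\tau^{c-1}(\bigvee_m\F)$ is equivalent to the assertion that $\bigcap_{x\in 2^{c}}f(x)A\in\bigvee_m\F$ for \emph{every} cubic function $f\colon 2^{c}\to G$ (indeed, if $\mathbf g=(g_1,\dots,g_c)$ is the generating vector of $f$, then $\bigcap_{x\in 2^{c}}f(x)A$ is precisely the intersection $\bigcap_{k_1,\dots,k_c\in\{0,1\}}g_1^{k_1}\cdots g_c^{k_c}A$ appearing in Corollary~\ref{tau-n}). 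Verifying this for a fixed $f$ is the whole task.

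The first step is the distributive law: since $f(x)A=\bigcup_{i=1}^{n}f(x)A_i$ for every $x\in 2^{c}$, taking the union over all functions $\varphi\colon 2^{c}\to\{1,\dots,n\}$ gives
$$\bigcap_{x\in 2^{c}}f(x)A=\bigcup_{\varphi}\;\bigcap_{x\in 2^{c}}f(x)A_{\varphi(x)}.$$
There are exactly $n^{2^{c}}=m$ such functions $\varphi$, so it suffices to show that for each of them the set $B_\varphi:=\bigcap_{x\in 2^{c}}f(x)A_{\varphi(x)}$ belongs to $\F$.

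This is the point at which the torsion-freeness of $G$ is used, via Lemma~\ref{alpha} together with the very definition of $c=c(n)$: the cubic function $f\colon 2^{c}\to G$ satisfies $|f(2^{c})|>n$. Choose $y_0,\dots,y_n\in 2^{c}$ with $f(y_0),\dots,f(y_n)$ pairwise distinct; since the $n+1$ labels $\varphi(y_0),\dots,\varphi(y_n)$ lie in the $n$-element set $\{1,\dots,n\}$, the pigeonhole principle yields $j\ne l$ with $\varphi(y_j)=\varphi(y_l)=:i$. Then $f(y_j)\ne f(y_l)$ and $A_i$ is $\F$-thin, so $f(y_j)A_i\cap f(y_l)A_i\in\F$, whence
$$B_\varphi\subset f(y_j)A_{\varphi(y_j)}\cap f(y_l)A_{\varphi(y_l)}=f(y_j)A_i\cap f(y_l)A_i\in\F,$$
and $B_\varphi\in\F$ since $\F$ is lower. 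This finishes the plan.

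The only substantial ingredient is Lemma~\ref{alpha}; the rest is the distributive law and a single application of the pigeonhole principle. The care goes into the bookkeeping of constants --- making sure that the cube governing $\tau^{c(n)-1}$-membership has dimension $c(n)$, so that $|f(2^{c(n)})|>n$, and that the distributive law produces exactly $n^{2^{c(n)}}$ terms --- but I do not foresee a genuine obstacle. It is worth stressing that torsion-freeness is essential here: in an infinite Boolean group a cubic function has image of size at most $2$ regardless of its dimension, so no finite $c(n)$ exists, in agreement with the failure of additivity of $\tau^{*}(\F_G)$ for such groups.
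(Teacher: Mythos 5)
Your proof is correct and follows essentially the same route as the paper's: reduce to Corollary~\ref{tau-n} for the (left-invariant, lower) family $\bigvee_m\F$, expand $\bigcap_{x\in 2^{c(n)}}\mathbf g^xA$ by the distributive law into $n^{2^{c(n)}}$ terms indexed by colorings $\varphi$, and kill each term by combining $|f(2^{c(n)})|>n$ (the definition of $c(n)$, finite by Lemma~\ref{alpha}) with the pigeonhole principle and the $\F$-thinness of the $A_i$. No gaps; the closing remark about Boolean groups is a nice sanity check but not needed.
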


\begin{proof} 
Fix any $A\in\bigvee\limits_n\tau(\F)$ and write it as the union $A=A_1\cup\dots\cup A_n$ of sets $A_1,\dots,A_n\in\tau(\F)$. 
The inclusion $A\in\tau^{c(n)-1}(\bigvee\limits_{m}\F)$ will follow from Corollary~\ref{tau-n} as soon as we check that 
$$\bigcap_{x\in 2^{c(n)}}\mathbf g^xA\in\bigvee_m\F$$for each vector $\mathbf g\in (G\setminus\{e\})^{c(n)}$. De Morgan's law guarantees that
$$
\bigcap_{x\in 2^{c(n)}}\mathbf g^x\cdot (\bigcup_{i=1}^nA_i)=
\bigcup_{f\in n^{2^{c(n)}}}\bigcap_{x\in 2^{c(n)}}\mathbf g^xA_{f(x)}.$$
So, the proof will be complete as soon as we check that for every function $f:2^{c(n)}\to n$ the set $\bigcap\limits_{x\in 2^{c(n)}}\mathbf  g^xA_{f(x)}$ belongs to $\F$. The vector $\mathbf g\in (G\setminus\{e\})^{c(n)}$ induces the cubic function $g:2^{c(n)}\to G$, $g:x\mapsto\mathbf g^x$. The definition of the function $c(n)$ guarantees that $|g(2^{c(n)})|>n$. The function $f:2^{c(n)}\to n$ can be thought as a coloring of the cube $2^{c(n)}$ into $n$ colors. Since $|g(2^{c(n)})|>n$, there are  two points $y,z\in 2^{c(n)}$ colored by the same color such that $g(y)\ne g(z)$. Then $\mathbf g^y=g(y)\ne g(z)=\mathbf g^z$ but $f(y)=f(z)=k$ for some $k\le n$.
Consequently,  $$\bigcap_{x\in 2^{c(n)}}\mathbf g^xA_{f(x)}\subset \mathbf g^y A_k\cap \mathbf g^z A_k\in\F$$because the set $A_k\in\tau(\F)$.
\end{proof}

Now consider the function $c:\IN\times\w\to\w$ defined recursively as $c(n,0)=0$ for all $n\in\IN$ and $c(n,k+1)=c(n)-1+c(n^{2^{c(n)}},k)$ for $(n,k)\in\IN\times\w$. Observe that $c(n,1)=c(n)-1$ for all $n\in\IN$.

\begin{lemma}\label{l4.3} If the group $G$ is torsion-free and  $\F\subset\mathcal P_G$ is a left-invariant ideal, then 
$$\bigvee_n\tau^k(\F)\subset\tau^{c(n,k)}(\F)$$for all pairs $(n,k)\in\IN\times\w$.
\end{lemma}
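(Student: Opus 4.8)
The plan is to prove the inclusion $\bigvee_n\tau^k(\F)\subset\tau^{c(n,k)}(\F)$ by induction on $k\in\w$, with $n\in\IN$ ranging freely, so that the inductive hypothesis is applied at $k$ but with a larger value of $n$ (namely $n^{2^{c(n)}}$), exactly matching the recursion $c(n,k+1)=c(n)-1+c(n^{2^{c(n)}},k)$. The base case $k=0$ is immediate: $\tau^0(\F)=\F$ is an ideal, hence $\bigvee_n\F\subset\F=\tau^{c(n,0)}(\F)$.

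For the inductive step, assume the inclusion holds for $k$ and all $n$, and fix $n$. Let $A\in\bigvee_n\tau^{k+1}(\F)$, so $A=A_1\cup\dots\cup A_n$ with each $A_i\in\tau^{k+1}(\F)=\tau(\tau^{k}(\F))$; thus $A_i\in\tau(\F_i)$ where $\F_i=\tau^{<k+1}(\F)\supseteq\tau^k(\F)$. The key observation is that the argument of Lemma~\ref{sum} is really about a single application of the $\tau$-operation to a union of $n$ sets, and it works relative to any left-invariant lower family, not just $\F$ itself. Concretely, I would first invoke (a relativized form of) Lemma~\ref{sum}, or simply rerun its proof, to conclude
$$\bigvee_n\tau^{k+1}(\F)=\bigvee_n\tau\big(\tau^{k}(\F)\big)\subset\tau^{c(n)-1}\Big(\bigvee_{m}\tau^{k}(\F)\Big),\qquad m=n^{2^{c(n)}}.$$
Here the De Morgan expansion and the pigeonhole argument on the cubic function $g\colon 2^{c(n)}\to G$ are used verbatim: for each coloring $f\colon 2^{c(n)}\to n$ one finds $y\neq z$ in the cube with $\mathbf g^y\neq\mathbf g^z$ and $f(y)=f(z)=j$, so $\bigcap_x\mathbf g^x A_{f(x)}\subset \mathbf g^y A_j\cap\mathbf g^z A_j\in\tau^{k}(\F)$ because $A_j\in\tau(\tau^{k}(\F))$. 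Hence each term of the expansion lies in $\tau^k(\F)$, and the union of $m$ such terms lies in $\bigvee_m\tau^k(\F)$.

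Next, by the inductive hypothesis applied with the parameter $m=n^{2^{c(n)}}$ in place of $n$, we have $\bigvee_m\tau^k(\F)\subset\tau^{c(m,k)}(\F)=\tau^{c(n^{2^{c(n)}},k)}(\F)$. The last step is to feed this back: if a family $\G\subset\G'$ then $\tau^{\beta}(\G)\subset\tau^{\beta}(\G')$ monotonically, and moreover applying $\tau$ a further $\gamma$ times to $\tau^{\beta}(\F)$ lands inside $\tau^{\beta+\gamma}(\F)$ (this is a routine consequence of the definition $\tau^{\alpha}(\F)=\tau(\tau^{<\alpha}(\F))$, which I would state as a small auxiliary observation, proved by induction on $\gamma$). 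Combining,
$$\bigvee_n\tau^{k+1}(\F)\subset\tau^{c(n)-1}\big(\tau^{c(n^{2^{c(n)}},k)}(\F)\big)\subset\tau^{\,c(n)-1+c(n^{2^{c(n)}},k)}(\F)=\tau^{c(n,k+1)}(\F),$$
which closes the induction. The main obstacle, such as it is, is the bookkeeping in this last composition step: one must be careful that $c(n)-1$ is a genuine natural number (it is, since $c(n)\ge n\ge 1$) and that the composition law $\tau^{p}(\tau^{q}(\F))\subset\tau^{p+q}(\F)$ for $p,q\in\w$ is correctly set up from the recursive definition of $\tau^{\alpha}$; everything else is a faithful rerun of Lemmas~\ref{alpha} and \ref{sum}.
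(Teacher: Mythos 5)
Your proof is correct and follows essentially the same route as the paper: induction on $k$, with the inductive step obtained by applying Lemma~\ref{sum} to the left-invariant lower family $\tau^k(\F)$, then the inductive hypothesis with the enlarged parameter $m=n^{2^{c(n)}}$, and finally the composition law $\tau^{p}(\tau^{q}(\F))\subset\tau^{p+q}(\F)$ matching the recursion for $c(n,k+1)$. The only difference is cosmetic: the paper invokes Lemma~\ref{sum} directly (no relativization is needed, since it is stated for an arbitrary left-invariant lower family), whereas you also spell out the pigeonhole argument and the composition observation explicitly.
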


\begin{proof} By induction on $k$. For $k=0$ the equality  $\bigvee_n\tau^0(\F)=\F=\tau^{c(n,0)}(\F)$ holds because $\F$ is additive.

Assume that Lemma is true for some $k\in\w$. By Lemma~\ref{sum} and by the inductive assumption, for every $n\in\IN$ we get
$$
\begin{aligned}
\bigvee_n\tau^{k+1}(\F)&=\bigvee_n\tau(\tau^k(\F))\subset\tau^{c(n)-1}\big(\bigvee_{n^{2^{c(n)}}}\tau^k(\F)\big)\subset \\
&\tau^{c(n)-1}(\tau^{c(n^{2^{c(n)}},k)}(\F))=
\tau^{c(n)-1+c(n^{2^{c(n)}},k)}(\F)=
\tau^{c(n,k+1)}(\F).
\end{aligned}$$
\end{proof}

Now we are able to present:

\begin{proof}[Proof of Theorem~\ref{ideal}] Assume that $G$ is a torsion-free group $G$ and $\F\subset\mathcal P_G$ is a left-invariant ideal. By transfinite induction we shall prove that for each limit ordinal $\alpha$ the family $\tau^{<\alpha}(\F)$ is additive. 
For the smallest limit ordinal $\alpha=0$ the additivity of the family $\tau^0(\F)=\F$ is included into the hypothesis. Assume that for some non-zero limit ordinal $\alpha$ we have proved that the families $\tau^{<\beta}(\F)$ are additive for all limit ordinals $\beta<\alpha$. Two cases are possible:

1) $\alpha=\beta+\w$ for some limit ordinal $\beta$. By the inductive assumption, the family $\tau^{<\beta}(\F)$ is additive. Then Lemma~\ref{l4.3} implies that the family $\tau^{<\alpha}(\F)=\tau^{<\w}(\tau^{<\beta}(\F))$ is additive.

2) $\alpha=\sup B$ for some family $B\not\ni \alpha$ of limit ordinals. By the inductive assumption for each limit ordinal $\beta\in B$ the family $\tau^{<\beta}(\F)$ is additive and then the union $$\tau^{<\alpha}(\F)=\bigcup_{\beta\in B}\tau^{<\beta}(\F)$$ is additive too. 
 
This completes the proof of the additivity of the families $\tau^{<\alpha}(\F)$ for all limit ordinals $\alpha$. Since the torsion-free group $G$ is infinite, the ordinal $\alpha=|G|^+$ is limit and hence the family $\tau^*(\F)=\tau^{<\alpha}(\F)$ is additive. Being left-invariant and lower, the family $\tau^*(\F)$ is a left-invariant ideal in $\mathcal P_G$.
\end{proof}

\begin{remark} Theorem~\ref{ideal} is not true for an infinite Boolean group $G$. In this case Theorem 1(2) of \cite{LP2} implies that $\tau^*(\F_G)=\tau(\F_G)$. Then for any infinite thin subset $A\subset G$ and any $x\in G\setminus\{e\}$ the union $A\cup xA$ is not thin as $(A\cup xA)\cap x(A\cup xA)=A\cup xA$ is infinite. Consequently, the family $\tau^*(\F_G)=\tau(\F_G)$ is not additive.
\end{remark}

\section{$h$-Invariant families of subsets in groups}\label{s:hinv}

 Let $G$ be a group and $h:H\to K$ be an isomorphism between subgroups of $G$. A family $\F$ of subsets of $G$ is called {\em $h$-invariant} if a subset $A\subset H$ belongs to $\F$ if and only if $h(A)\in\F$. 

\begin{example} The ideal $\F_\IZ$ of finite subsets of the group $\IZ$ is $h$-invariant for each isomorphism $h_k:\IZ\to k\IZ$, $h:x\mapsto kx$, where $k\in\IN$.
\end{example}

\begin{proposition}\label{hinv} Let $h:H\to K$ be an isomorphism between subgroups of a group $G$. For any $h$-invariant family $\F\subset\mathcal P_G$ and any ordinal $\alpha$ the family $\tau^\alpha(\F)$ is $h$-invariant.
\end{proposition}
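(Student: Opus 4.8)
The plan is to prove the proposition by transfinite induction on $\alpha$; the only substantial point is to understand how the operator $\tau$ behaves on subsets of the subgroup $H$. The base case $\alpha=0$ is precisely the hypothesis that $\F$ is $h$-invariant (and the degenerate cases $\F=\emptyset$ and $\F=\mathcal P_G$ are trivial, so I may assume $\emptyset\in\F$). For the inductive step fix a non-zero ordinal $\alpha$ and assume $\tau^\beta(\F)$ is $h$-invariant for all $\beta<\alpha$. Then $\G:=\tau^{<\alpha}(\F)=\bigcup_{\beta<\alpha}\tau^\beta(\F)$ is $h$-invariant as well, being a union of $h$-invariant families; and, by the remark in the Introduction that $\tau$ preserves left-invariance and lowerness (applied along the induction), $\G$ is left-invariant with $\emptyset\in\G$. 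Recall $\tau^\alpha(\F)=\tau(\G)$.

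The crux is the following reduction: for any subgroup $L$ of $G$, any $A\subset L$, and any left-invariant family $\G$ with $\emptyset\in\G$,
$$A\in\tau(\G)\iff zA\cap A\in\G\ \text{for all}\ z\in L\setminus\{e\}.$$
The implication $(\Rightarrow)$ follows by taking $x=z$, $y=e$ in the definition of $\tau(\G)$. For $(\Leftarrow)$, given distinct $x,y\in G$: if $xL\ne yL$ then $xA\cap yA=\emptyset\in\G$; if $xL=yL$, put $z=y^{-1}x\in L\setminus\{e\}$, so that $xA\cap yA=y(zA\cap A)\in\G$ by left-invariance of $\G$. Now apply this with $L=H$ and $\G=\tau^{<\alpha}(\F)$: since $zA\cap A\subset H$ for $z\in H$, the inductive $h$-invariance of $\tau^{<\alpha}(\F)$ gives $zA\cap A\in\tau^{<\alpha}(\F)\iff h(zA\cap A)\in\tau^{<\alpha}(\F)$. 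As $h$ is an injective homomorphism on $H$ it commutes with intersections of subsets of $H$, so $h(zA\cap A)=h(z)h(A)\cap h(A)$, and $z\mapsto h(z)$ maps $H\setminus\{e\}$ bijectively onto $K\setminus\{e\}$. Applying the reduction once more, this time with $L=K$ to the set $h(A)\subset K$, we conclude
$$A\in\tau^\alpha(\F)\iff\big(\forall z\in H\setminus\{e\}:\ h(z)h(A)\cap h(A)\in\tau^{<\alpha}(\F)\big)\iff h(A)\in\tau^\alpha(\F),$$
which completes the induction.

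I expect the only genuinely delicate point to be this reduction: one must notice that $\F$-thinness of a subset of $H$ is insensitive to the translations carrying $H$ off itself (they produce empty intersections), while the translations within $H$ can be normalised to intersections of the form $zA\cap A$ using the left-invariance of $\tau^{<\alpha}(\F)$ — which is exactly why it matters that $\tau$ preserves left-invariance. Everything else is bookkeeping. As a cross-check, one could instead argue through the $\tau$-trees of Section~\ref{s2}: for $A\subset H$ one has $T_A\subset(H\setminus\{e\})^{<\w}$ (a coordinate outside $H$ forces $A_s=\emptyset$), the coordinatewise map induced by $h$ is a tree isomorphism sending $T_A$ onto $T_{h(A)}$ because $h(A_s)=(h(A))_{h(s)}$ and $\F$ is $h$-invariant, and Theorem~\ref{t1.2} then yields the claim since isomorphic well-founded trees have the same rank.
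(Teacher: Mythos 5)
Your proof is correct and follows essentially the same route as the paper: transfinite induction on $\alpha$, using that translations by elements outside the relevant subgroup produce empty intersections, that $h(A\cap xA)=h(A)\cap h(x)h(A)$ for $x\in H$, and the inductive $h$-invariance (and left-invariance) of $\tau^{<\alpha}(\F)$. Packaging the one-parameter normalisation $xA\cap yA\rightsquigarrow zA\cap A$ as an explicit reduction lemma applied to both $H$ and $K$ is just a tidier organisation of the same argument the paper carries out directly.
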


\begin{proof} For $\alpha=0$ the $h$-invariance of $\tau^0(\F)=\F$ follows from our assumption. Assume that for some ordinal $\alpha$ we have established that the families $\tau^\beta(\F)$ are $h$-invariant for all ordinals $\beta<\alpha$. Then the union $\tau^{<\alpha}(\F)=\bigcup_{\beta<\alpha}\tau^\beta(\F)$ is also $h$-invariant. 

We shall prove that the family $\tau^\alpha(\F)$ is $h$-invariant. 
Given a set $A\subset H$ we need to prove that $A\in\tau^\alpha(\F)$ if and only if $h(A)\in\tau^\alpha(\F)$. 

Assume first that $A\in\tau^\alpha(\F)$. To show that $h(A)\in\tau^\alpha(\F)$, take any element $y\in G\setminus\{e\}$. If $y\notin K$, then $h(A)\cap yh(A)=\emptyset\in \tau^{<\alpha}(\F)$. If $y\in K$, then $y=h(x)$ for some $x\in H$ and then $h(A)\cap yh(A)=h(A\cap xA)\in\tau^{<\alpha}(\F)$ since $A\cap xA\in\tau^{<\alpha}(\F)$ and the family $\tau^{<\alpha}(\F)$ is $h$-invariant.

Now assume that $A\notin\tau^\alpha(\F)$. Then there is an element $x\in G\setminus\{e\}$ such that $A\cap xA\notin\tau^{<\alpha}(\F)$.
Since $A\subset H$, the element $x$ must belong to $H$ (otherwise $A\cap xA=\emptyset\in\tau^{<\alpha}(\F)$).
 Then for the element $y=h(x)$ we get $h(A)\cap yh(A)\notin\tau^{<\alpha}(\F)$ by the $h$-invariance of the family $\tau^{<\alpha}(\F)$. Consequently, $h(A)\notin\tau^\alpha(\F)$.
\end{proof}   
   
\begin{corollary} Let $h:H\to K$ be an isomorphism between subgroups of a group $G$. For any $h$-invariant family $\F\subset\mathcal P_G$  the family $\tau^*(\F)$ is $h$-invariant.
\end{corollary}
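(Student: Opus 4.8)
The plan is to deduce this directly from Proposition~\ref{hinv} together with the transfinite description of the thin-completion recalled in the Introduction. By Proposition 3 of \cite{LP2} we have
$$\tau^*(\F)=\bigcup_{\alpha<|G|^+}\tau^\alpha(\F),$$
so it suffices to prove the following two facts: first, that each $\tau^\alpha(\F)$ with $\alpha<|G|^+$ is $h$-invariant, which is precisely Proposition~\ref{hinv}; and second, that an arbitrary union of $h$-invariant subfamilies of $\mathcal P_G$ is again $h$-invariant.

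For the second fact, let $\{\F_i\}_{i\in I}$ be a family of $h$-invariant subfamilies of $\mathcal P_G$ and set $\F=\bigcup_{i\in I}\F_i$. Take any $A\subset H$. If $A\in\F$, then $A\in\F_i$ for some $i\in I$, and the $h$-invariance of $\F_i$ gives $h(A)\in\F_i\subset\F$. Conversely, if $h(A)\in\F$, then $h(A)\in\F_i$ for some $i\in I$, and since $A\subset H$ and $\F_i$ is $h$-invariant we obtain $A\in\F_i\subset\F$. Thus $A\in\F$ if and only if $h(A)\in\F$, i.e.\ $\F$ is $h$-invariant. Applying this to the family $\{\tau^\alpha(\F):\alpha<|G|^+\}$, whose members are $h$-invariant by Proposition~\ref{hinv}, we conclude that $\tau^*(\F)$ is $h$-invariant.

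There is no real obstacle in this argument: the whole statement reduces to Proposition~\ref{hinv} plus the one-line observation above about unions. The only point requiring a moment's care is that $h$-invariance is a biconditional restricted to subsets of $H$, so when passing to a union one must verify both implications separately — which is exactly what the displayed argument does.
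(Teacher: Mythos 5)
Your proof is correct and is exactly the argument the paper intends: the corollary is stated without proof because it follows immediately from Proposition~\ref{hinv} together with the identity $\tau^*(\F)=\bigcup_{\alpha<|G|^+}\tau^\alpha(\F)$, and your verification that a union of $h$-invariant families is $h$-invariant (checking both directions of the biconditional for $A\subset H$) fills in the only detail. Nothing further is needed.
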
 

\begin{definition} A {\em left-invariant} family $\F\subset\mathcal P_G$ of subsets of a group $G$ is called 
\begin{itemize}
\item {\em auto-invariant} if $\F$ is $h$-invariant for each injective homomorphism $h:G\to G$;
\item {\em sub-invariant} if $\F$ is $h$-invariant for each isomorphism $h:H\to K$ between subgroups $K\subset H$ of $G$.
\item {\em strongly invariant} if $\F$ is $h$-invariant for each isomorphism $h:H\to K$ between subgroups of $G$.
\end{itemize}
\end{definition}

It is clear that
$$\mbox{strongly invariant } \Ra \mbox{ sub-invariant } \Ra \mbox{ auto-invariant}
$$ 

\begin{remark} Each auto-invariant family $\mathcal F\subset\mathcal P_G$, being left-invariant is also right-invariant.
\end{remark}

Proposition~\ref{hinv} implies:

\begin{corollary} If $\F\subset \mathcal P_G$ is an auto-invariant (sub-invariant, strongly invariant) family of subsets of a group $G$, then so are the families $\tau^*(\F)$ and $\tau^\alpha(\F)$ for all ordinals $\alpha$.
\end{corollary}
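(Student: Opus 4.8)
The plan is to reduce each of the three cases to a direct application of Proposition~\ref{hinv} (which handles the families $\tau^\alpha(\F)$) together with the Corollary asserting that $\tau^*(\F)$ is $h$-invariant whenever $\F$ is, applied to every isomorphism in the class of maps that defines the invariance notion under consideration. The point is that each of the three properties is, by its very definition, nothing more than $h$-invariance quantified over a prescribed class of isomorphisms between subgroups of $G$, and Proposition~\ref{hinv} upgrades $h$-invariance of $\F$ to $h$-invariance of every $\tau^\alpha(\F)$ for each individual $h$, while the $\tau^*$ Corollary does the same for $\tau^*(\F)$.

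First I would treat the strongly invariant and sub-invariant cases, which are immediate. Suppose $\F$ is strongly invariant, so that $\F$ is $h$-invariant for every isomorphism $h:H\to K$ between subgroups of $G$. Fixing any such $h$, Proposition~\ref{hinv} gives that $\tau^\alpha(\F)$ is $h$-invariant for every ordinal $\alpha$, and the Corollary on $\tau^*$ gives that $\tau^*(\F)$ is $h$-invariant. As $h$ ranges over all isomorphisms between subgroups, this says precisely that $\tau^\alpha(\F)$ and $\tau^*(\F)$ are strongly invariant. The sub-invariant case is verbatim the same, the only difference being that one now quantifies only over isomorphisms $h:H\to K$ with $K\subset H$; since Proposition~\ref{hinv} places no restriction on $h$, it applies to each of these maps unchanged.

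The one case deserving a short extra remark is auto-invariance, and this is the single point of care in the argument. Here $\F$ is assumed $h$-invariant for each injective homomorphism $h:G\to G$, while Proposition~\ref{hinv} is stated for isomorphisms between subgroups. I would bridge this by observing that an injective endomorphism $h:G\to G$ is exactly an isomorphism $h:G\to h(G)$ onto its image $h(G)$, which is a subgroup of $G$; and the notion of $h$-invariance is literally unchanged under this reinterpretation, since its defining clause quantifies over subsets $A$ of the domain subgroup, which is $G$ under either reading. With this identification, Proposition~\ref{hinv} and the Corollary on $\tau^*$ apply to each injective endomorphism $h$, so $\tau^\alpha(\F)$ and $\tau^*(\F)$ are $h$-invariant; quantifying over all injective endomorphisms yields their auto-invariance. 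I anticipate no genuine obstacle beyond this identification: all the mathematical content already resides in Proposition~\ref{hinv} and the $\tau^*$ Corollary, and what remains is only to quantify those statements over the correct class of maps.
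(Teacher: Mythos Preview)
Your proposal is correct and matches the paper's approach: the paper gives no explicit proof at all, simply stating that the corollary is implied by Proposition~\ref{hinv}, and your write-up is a faithful unpacking of that implication (including the one nontrivial observation, that an injective endomorphism $h:G\to G$ is the same datum as an isomorphism $h:G\to h(G)$ between subgroups). One minor point you may want to make explicit: since the definitions of auto-, sub-, and strongly invariant are stated only for \emph{left-invariant} families, you should note that $\tau^\alpha(\F)$ and $\tau^*(\F)$ inherit left-invariance from $\F$, which the paper records earlier.
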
 

It is clear that the famly $\F_G$ of finite subsets of a group $G$ is strongly invariant. 
Now we present some natural examples of families, which are not strongly invariant. Following \cite{BM}, we call a subset $A$ of a group $G$
\begin{itemize}
\item {\em large} if there is a finite subset $F\subset G$ with $G=FA$;
\item {\em small} if for any large set $L\subset G$ the set $L\setminus A$ remains large. 
\end{itemize}
It follows that the family $\mathcal S_G$ of small subsets of $G$ is a left-invariant ideal in $\mathcal P_G$. According to \cite{BM}, a subset $A\subset G$ is small if and only if for every finite subset $F\subset G$ the complement $G\setminus FA$ is large.
We shall need the following (probably known) fact.

\begin{lemma}\label{finind} Let $H$ be a subgroup of finite index in a group $G$. A subset $A\subset H$ is small in $H$ if and only if $A$ is small in $G$.
\end{lemma}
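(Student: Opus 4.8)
The plan is to reduce the whole statement to one clean observation, which I will call $(\star)$: for a subset $B\subseteq H$, $B$ is large in $H$ if and only if $B$ is large in $G$. I would establish $(\star)$ first. Fix a finite left transversal $t_1,\dots,t_k$ of $H$ in $G$, so $G=\bigcup_{i=1}^k t_iH$ (this is the only place the finite-index hypothesis really enters). If $H=F_0B$ with $F_0\subseteq H$ finite, then $G=\bigcup_i t_iF_0B=\big(\bigcup_i t_iF_0\big)B$, so $B$ is large in $G$. Conversely, if $G=FB$ with $F\subseteq G$ finite, then for $h\in H$ we may write $h=fb$ with $f\in F$, $b\in B$; since $h\in H$ and $b\in B\subseteq H$ we get $f=hb^{-1}\in H$, whence $h\in(F\cap H)B$. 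Thus $H=(F\cap H)B$ and $B$ is large in $H$.

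Granting $(\star)$, the implication ``$A$ small in $G$ $\Rightarrow$ $A$ small in $H$'' is immediate from the definition of smallness. Let $L\subseteq H$ be large in $H$. By $(\star)$, $L$ is large in $G$, so $L\setminus A$ is large in $G$ because $A$ is small in $G$. Since $L\setminus A\subseteq L\subseteq H$, a second application of $(\star)$ gives that $L\setminus A$ is large in $H$. As $L$ was an arbitrary large subset of $H$, $A$ is small in $H$.

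For the converse, ``$A$ small in $H$ $\Rightarrow$ $A$ small in $G$'', I would instead use the characterization from \cite{BM}: it suffices to show that $G\setminus FA$ is large in $G$ for every finite $F\subseteq G$. Write each $f\in F$ as $f=t_{i(f)}h_f$ with $h_f\in H$, and put $F_i=\{h_f: f\in F,\ i(f)=i\}\subseteq H$ (a finite, possibly empty set). Then $FA=\bigcup_{i=1}^k t_i(F_iA)$, and since the cosets $t_iH$ are pairwise disjoint and $F_iA\subseteq H$ we obtain the transparent decomposition $G\setminus FA=\bigsqcup_{i=1}^k t_i(H\setminus F_iA)$. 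Now $A$ small in $H$ yields finite sets $E_i\subseteq H$ with $H=E_i(H\setminus F_iA)$ (trivially, with $E_i=\{e\}$, when $F_i=\emptyset$), so $t_iH=t_iE_i(H\setminus F_iA)=(t_iE_it_i^{-1})\cdot t_i(H\setminus F_iA)\subseteq(t_iE_it_i^{-1})(G\setminus FA)$. Taking the union over $i$ gives $G=E(G\setminus FA)$ with $E=\bigcup_{i=1}^k t_iE_it_i^{-1}$ finite, so $G\setminus FA$ is large in $G$, and $A$ is small in $G$.

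I do not expect a serious obstacle here — the lemma is essentially bookkeeping — but the one point that needs care is the coset decomposition $G\setminus FA=\bigsqcup_i t_i(H\setminus F_iA)$ together with the conjugation trick $t_iE_i(H\setminus F_iA)=(t_iE_it_i^{-1})\,t_i(H\setminus F_iA)$, which is what converts a covering of $t_iH$ by left translates of $H\setminus F_iA$ into a covering of $t_iH$ by left translates, inside $G$, of the piece $t_i(H\setminus F_iA)$ of $G\setminus FA$. One must also be consistent about using \emph{left} cosets and left translations throughout, since smallness as defined in the paper is phrased via left translation.
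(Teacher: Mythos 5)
Your proof is correct, and its first half coincides with the paper's argument: your observation $(\star)$ (for $B\subset H$, $B$ is large in $H$ iff it is large in $G$) is exactly what the paper proves inline, via $G=FB\Rightarrow H=(F\cap H)B$ and the finite-index promotion of largeness from $H$ to $G$, and the deduction ``$A$ small in $G\Rightarrow A$ small in $H$'' is identical. In the converse direction you take a longer route than the paper. After invoking the characterization from \cite{BM}, the paper does not decompose $G\setminus FA$ over all cosets: it simply notes that $(G\setminus FA)\cap H=H\setminus F_HA$ where $F_H=F\cap H$ (a product $fa$ with $f\notin H$ and $a\in A\subset H$ cannot lie in $H$), that $H\setminus F_HA$ is large in $H$ because $A$ is small in $H$, hence large in $G$ by $(\star)$, and finally that a superset of a large set is large, so $G\setminus FA\supset H\setminus F_HA$ is large in $G$. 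Your full decomposition $G\setminus FA=\bigcup_{i} t_i(H\setminus F_iA)$ (disjoint over the cosets $t_iH$) together with the conjugation trick $t_iE_i(H\setminus F_iA)=(t_iE_it_i^{-1})\,t_i(H\setminus F_iA)$ is correct bookkeeping and even produces an explicit covering of every coset, but it is unnecessary: the identity-coset piece alone already witnesses largeness. Both arguments ultimately rest on the same two ingredients, the \cite{BM} reformulation of smallness and the transfer of largeness between $H$ and a finite-index overgroup.
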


\begin{proof} First assume that $A$ is small in $G$. To show that $A$ is small in $H$, take any large subset $L\subset H$. Since $H$ has finite index in $G$, the set $L$ is large in $G$. Since $A$ is small in $G$, the complement $L\setminus A$ is large in $G$. Consequently, there is a finite subset $F\subset G$ such that $F(L\setminus A)=G$. Then for the finite set $F_H=F\cap H$, we get $F_H(L\setminus A)=H$, which means that $L\setminus A$ is large in $H$.

Now assume that $A$ is small in $H$. To show that $A$ is small in $G$, it suffices to show that for every finite subset $F\subset G$ the complement $G\setminus FA$ is large in $G$. Observe that $(G\setminus FA)\cap H=H\setminus F_HA$ where $F_H=F\cap H$. Since $A$ is small in $H$, the set $H\setminus F_HA$ is large in $H$ and hence large in $G$ (as $H$ has finite index in $G$). Then the set $G\setminus FA\supset H\setminus F_HA$ is large in $G$ too.
\end{proof}

\begin{proposition}Let $G$ be an infinite abelian group. \begin{enumerate}
\item If $G$ is finitely generated, then the ideal $\mathcal S_G$ is strongly invariant.
\item If $G$ is infinitely generated free abelian group, then the ideal $\mathcal S_G$ is not auto-invariant.
\end{enumerate}
\end{proposition}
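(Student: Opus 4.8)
The plan is to rest both parts on one elementary fact, the \emph{coset lemma}: \emph{if $N$ is a subgroup of infinite index in an abelian group $\Gamma$, then $N$ is small in $\Gamma$, and hence (as $\mathcal S_\Gamma$ is a lower family) so is every subset of $N$.} To see this, fix a finite $F\subset\Gamma$; then $FN$ lies in a union of at most $|F|$ cosets of $N$, so $\Gamma\setminus FN$ is a union of all but finitely many cosets of $N$. Since $\Gamma/N$ is infinite, one may pick $t\in\Gamma$ with $(\Gamma\setminus FN)\cup\bigl(t+(\Gamma\setminus FN)\bigr)=\Gamma$ (it suffices that the coset of $t$ avoid the finite difference set of the cosets met by $FN$), so $\Gamma\setminus FN$ is large; as $F$ was arbitrary, $N$ is small. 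I will also use repeatedly that being ``large'', and therefore being ``small'', is preserved by group isomorphisms.

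\textbf{Part (1).} Let $h\colon H\to K$ be an isomorphism between subgroups of the infinite finitely generated abelian group $G$. Since subgroups of a finitely generated abelian group are finitely generated, $H$ and $K$ are finitely generated, and being isomorphic they have a common torsion-free rank $k$; put $r=\rank(G)\ge1$. I would split according to whether $k<r$ or $k=r$. If $k<r$, then $\rank(G/H)=r-k\ge1$, so $G/H$ is infinite and the coset lemma makes \emph{every} subset of $H$ small in $G$; symmetrically every subset of $K$ is small in $G$, so for every $A\subset H$ both $A$ and $h(A)$ lie in $\mathcal S_G$ and the equivalence $A\in\mathcal S_G\Leftrightarrow h(A)\in\mathcal S_G$ holds trivially. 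If $k=r$, then $G/H$ is a finitely generated abelian group of torsion-free rank $0$, hence finite, so $H$ (and likewise $K$) has finite index in $G$; then Lemma~\ref{finind} rewrites ``$A\in\mathcal S_G$'' as ``$A$ is small in $H$'' and ``$h(A)\in\mathcal S_G$'' as ``$h(A)$ is small in $K$'', and the latter two are equivalent because $h\colon H\to K$ is a group isomorphism. In either case $A\in\mathcal S_G\Leftrightarrow h(A)\in\mathcal S_G$, so $\mathcal S_G$ is strongly invariant.

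\textbf{Part (2).} I would produce an injective endomorphism of $G$ whose image has infinite index. Fix a basis $\{e_i:i\in I\}$ of $G$ with $I$ infinite, choose a countable $\{i_n:n\in\w\}\subset I$, and define $\phi\colon G\to G$ by $\phi(e_{i_n})=e_{i_{n+1}}$ for $n\in\w$ and $\phi(e_i)=e_i$ for $i\notin\{i_n:n\in\w\}$. Then $\phi$ maps the basis injectively onto the linearly independent set $\{e_i:i\ne i_0\}$, so $\phi$ is injective and $H:=\phi(G)=\langle e_i:i\ne i_0\rangle$ has $G/H\cong\IZ$ infinite. By the coset lemma $\phi(G)=H$ is small in $G$. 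On the other hand $G$ itself is large in $G$, and in an infinite group no large set is small (putting a finite $F$ with $FA=G$ in place of $F'$ in the definition of smallness leaves the non-large set $\emptyset$), so $G\notin\mathcal S_G$. Hence, with $A=G$, we get $A\notin\mathcal S_G$ but $\phi(A)\in\mathcal S_G$, which shows that $\mathcal S_G$ is not $\phi$-invariant and therefore not auto-invariant.

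I expect the only real content to be the coset lemma and, in part (1), the point that ``small in $G$'' cannot be transported along $h$ directly but must first be rewritten as ``small in $H$'' — which Lemma~\ref{finind} allows precisely because the case $k=r$ forces $[G:H]<\infty$. The remaining steps (additivity of torsion-free rank, the description of $\phi$) are routine.
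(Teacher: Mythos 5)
Your proof is correct and follows essentially the same route as the paper: split part (1) by comparing the common free rank of $H\cong K$ with that of $G$, handling the finite-index case via Lemma~\ref{finind} and the infinite-index case by smallness of both sides, and for part (2) exhibit an injective endomorphism of $G$ whose image has infinite index (hence is small) while $G$ itself is not small. The only differences are cosmetic: you prove explicitly the fact (asserted without proof in the paper) that an infinite-index subgroup of an abelian group is small, and you use a concrete basis-shift endomorphism where the paper uses an abstract isomorphism onto a full-rank proper direct summand.
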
 

\begin{proof} 1. Assume that $G$ is a finitely generated abelian group. To show that $\mathcal S_G$ is strongly invariant, fix any isomorphism $h:H\to K$ between subgroups of $G$ and let $A\subset H$ be any subset. The groups $H,K$ are isomorphic and hence have the same free rank $r_0(H)=r_0(K)$. If $r_0(H)=r_0(K)<r_0(G)$, then the subgroups $H,K$ have infinite index in $G$ and hence are small. In this case the inclusions $A\in \mathcal S_G$ and $h(A)\in\mathcal S_G$ hold and so are equivalent.

If the free ranks $r_0(H)=r_0(K)$ and $r_0(G)$ coincide, then $H$ and $K$ are subgroups of finite index in the finitely generated group $G$. By Lemma~\ref{finind}, a subset $A\subset H$ is small in $G$ if and only if $A$ is small in $H$ if and only if $h(A)$ is small in the group $h(H)=K$ if and only if $h(A)$ is small in $G$.
\smallskip

2. Now assume that $G$ is an infinitely generated free abelian group. Then $G$ is isomorphic to the direct sum $\oplus^\kappa\IZ$ of $\kappa=|G|\ge\aleph_0$ many copies of the infinite cyclic group $\IZ$. Take any subset $\lambda\subset\kappa$ with infinite complement $\kappa\setminus\lambda$ and cardinality 
$|\lambda|=|\kappa|$ and fix an isomorphism $h:G\to H$ of the group $G=\oplus^\kappa\IZ$ onto its subgroup $H=\oplus^\lambda\IZ$. The subgroup $H$ has infinite index in $G$ and hence is small in $G$. Yet $h^{-1}(H)=G$ is not small in $G$, witnessing that the ideal $\mathcal S_G$ of small subsets of $G$ is not auto-invariant.
\end{proof}

\section{Thin-completeness of the families $\tau^\alpha(\F)$}\label{s5}

In this section we shall prove that in general the families $\tau^\alpha(\F)$ are not thin-complete. 
Our principal result is the following theorem that implies Theorem~\ref{m2} announced in the Introduction.

\begin{theorem}\label{t5.1} Let $G$ be a group containing a free abelian subgroup $H$ of cardinality $|H|=|G|$.
 If $\F$ is a sub-invariant ideal of subsets of $G$ such that $\tau(\F)\cap\mathcal P_H\not\subset\F$, then 
 $\tau^*(\F)\ne\tau^\alpha(\F)\ne\tau^{<\alpha}(\F)$ for all ordinals $\alpha<|G|^+$.
\end{theorem}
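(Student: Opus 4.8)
The plan is to build, for each ordinal $\alpha<|G|^+$, a witnessing set $W_\alpha\subset H$ that lies in $\tau^{\alpha+1}(\F)\setminus\tau^{<\alpha+1}(\F)$ (and a similar one separating $\tau^{<\alpha}$ from $\tau^\alpha$ at limit stages), so that the hierarchy $(\tau^\beta(\F))_{\beta<|G|^+}$ is strictly increasing and in particular never stabilizes before $|G|^+$; since $\tau^*(\F)=\bigcup_{\beta<|G|^+}\tau^\beta(\F)$, this gives $\tau^*(\F)\ne\tau^\alpha(\F)$ as well. The starting point is the hypothesis $\tau(\F)\cap\mathcal P_H\not\subset\F$: fix a set $A_0\subset H$ with $A_0\in\tau(\F)\setminus\F$, so by Theorem~\ref{t1.2} its $\tau$-tree $T_{A_0}$ is well-founded of rank exactly $1$ (it is nonempty because $A_0\notin\F$, and has rank $\le 1$ because $A_0\in\tau(\F)$). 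By Corollary~\ref{c3.4}(2) combined with Theorem~\ref{t1.2}, producing a set in $\tau^{\alpha+1}(\F)\setminus\tau^{<\alpha+1}(\F)$ is the same as producing a set $W\subset H$ whose $\tau$-tree is well-founded of rank exactly $-1+(\alpha+1)+1$, i.e. rank $\alpha+2$ for finite $\alpha$ and rank $\alpha+1$ for infinite $\alpha$.

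The engine for raising rank is disjoint amalgamation along independent directions of the free abelian group $H$. Since $H=\bigoplus_{\iota\in\kappa}\IZ e_\iota$ with $\kappa=|G|$, I can split $\kappa$ into $|G|^+$-many (or rather, enough) disjoint blocks and use a fresh generator $e_\iota$ to ``stack'' a copy of a lower-rank example on top of a translate. Concretely, given a family $\{W_\beta:\beta<\alpha\}$ of sets, each $W_\beta\subset H_\beta$ for pairwise ``independent'' subgroups $H_\beta\le H$, with $T_{W_\beta}$ well-founded of rank $r_\beta$, I would pick for each $\beta$ a fresh generator $g_\beta\in H$ not involved in $H_\beta$ or in any $H_{\beta'}$, and form $W_\alpha=\bigcup_{\beta<\alpha}(W_\beta\cup g_\beta W_\beta)$ (taking unions over a cofinal set of $\beta$'s with $r_\beta\to\sup$). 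The point of using $g_\beta W_\beta$ together with $W_\beta$ is that $(W_\alpha)_{(g_\beta)}=W_\alpha\cap g_\beta W_\alpha$ should reduce, after intersecting, essentially to $W_\beta$ (the cross-terms $g_\beta W_{\beta'}\cap W_{\beta''}$ lie in $\F$ because the subgroups are independent and $\F$ is a sub-invariant ideal, so finite translates of ``orthogonal'' pieces meet in $\F$-small sets), giving $\rank_{T_{W_\alpha}}((g_\beta))=r_\beta$ and hence $\rank(T_{W_\alpha})=\sup_\beta(r_\beta+1)+1$. For successor steps $\alpha=\beta+1$ one instead needs to raise the rank by exactly one: take a single fresh generator $g$ and set $W_{\beta+1}=W_\beta\cup gW_\beta$; then $\rank_{T_{W_{\beta+1}}}((g))=\rank(T_{W_\beta})$ while every other direction gives something of rank $<\rank(T_{W_\beta})$, so $\rank(T_{W_{\beta+1}})=\rank(T_{W_\beta})+1$. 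The base case is $W_0=A_0$ with $\rank(T_{A_0})=1$. The cardinality bookkeeping works because at stage $\alpha<|G|^+$ we have used at most $|\alpha|\cdot\aleph_0\le|G|$ generators, and $H$ has $|G|$ of them, so fresh generators are always available; to make the whole transfinite recursion coherent one fixes in advance a surjection from $\kappa$ onto $\kappa\times|G|^+$ (or simply notes $|\kappa\times|G|^+|=|G|$ is false in general — instead one reindexes: at stage $\alpha$ only $\le|G|$ generators are needed and they can be drawn from a block $B_\alpha\subset\kappa$ of size $|G|$, with the blocks $\{B_\alpha:\alpha<|G|^+\}$ chosen pairwise almost-disjoint modulo previously-used coordinates — routine since each stage consumes only $|G|$-many coordinates and $|G|^+$ is regular).

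The separation claim $W_\alpha\notin\tau^{<\alpha+1}(\F)$ follows immediately from $\rank(T_{W_\alpha})$ being exactly the threshold value (not less), via Theorem~\ref{t1.2}: if $W_\alpha$ were in $\tau^\beta(\F)$ for some $\beta\le\alpha$, its tree would have rank $\le-1+\beta+1\le-1+\alpha+1<\rank(T_{W_\alpha})$, a contradiction. Likewise the limit-stage construction puts $W_\alpha$ in $\tau^\alpha(\F)\setminus\tau^{<\alpha}(\F)$ when $\alpha$ is a limit, giving the second inequality $\tau^\alpha(\F)\ne\tau^{<\alpha}(\F)$ for limit $\alpha$; for successor $\alpha=\gamma+1$ the set $W_\gamma$ already witnesses $\tau^{\gamma+1}(\F)\ne\tau^\gamma(\F)=\tau^{<\gamma+1}(\F)$ once we check $\tau^{<\gamma+1}(\F)=\tau^\gamma(\F)$, which is immediate from monotonicity of the $\tau^\beta$.

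The main obstacle I anticipate is the rank-computation lemma: proving that the cross-terms really do fall into $\F$ and that the ``stacking'' raises rank by exactly the intended amount rather than merely by at least that amount. This is where sub-invariance of $\F$ is essential — one needs that for independent subgroups $H_\beta,H_{\beta'}$ and elements $g\in H$, a set of the form $gW_{\beta'}\cap W_{\beta''}$ is $\F$-small, which should come from projecting onto a coordinate where the two pieces are ``transverse'' and invoking $h$-invariance of $\F$ under the relevant coordinate isomorphisms together with $\F_G\subset\F$ (finite sets are small, and the intersection is finite or empty in the transverse direction). I expect this to be a careful but ultimately mechanical induction; the abelian, torsion-free, free structure of $H$ is exactly what makes the ``fresh generator'' trick produce a genuinely new direction in the $\tau$-tree each time.
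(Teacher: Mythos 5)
Your overall strategy (strictly increasing witnesses $W_\alpha\subset H$, successor step by a translate-union, limit step by amalgamating witnesses supported on independent summands) is in the same spirit as the paper, but two essential ingredients are missing, and one of them breaks the construction as written. The fatal one is the bookkeeping of ``fresh generators''. Your recursion runs for $|G|^+$ stages, each consuming up to $|G|$ generators from a basis of size only $|G|$; there is no way to choose $|G|^+$ blocks $B_\alpha\subset\kappa$ of size $|G|$ that are pairwise (almost) disjoint, and ``almost-disjoint modulo previously-used coordinates'' is vacuous once $|G|$ many coordinates have been consumed, so regularity of $|G|^+$ does not rescue the argument. Moreover, at a limit stage your earlier witnesses $W_\beta$ do \emph{not} sit on pairwise independent subgroups (their supports grow along the recursion), so you cannot amalgamate them directly. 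The paper's way around both problems is the transport mechanism you never formulate: since $\F$ is sub-invariant, Proposition~\ref{hinv} shows that \emph{every} level $\tau^\alpha(\F)$ is $h$-invariant, so a witness for $\tau^{\beta+1}(\F)\setminus\tau^\beta(\F)$ can be copied isomorphically into any prescribed summand --- into the members $H_n$ of a fixed independent family of copies of $H$ (Lemma~\ref{l6.7}), or into the shrinking chain $h^n(H)$ of an expanding isomorphism in the countable case (Lemmas~\ref{union} and \ref{l6.6}). This is exactly why sub-invariance is a hypothesis; using it only to argue that ``cross-terms are $\F$-small'' misses its role.

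The second gap is in the limit-stage membership proof. For $W_\alpha\cap xW_\alpha$ you inevitably get a union of several pieces of $\tau^{<\alpha}(\F)$ (the diagonal terms $(W_\beta\cup g_\beta W_\beta)\cap x(W_\beta\cup g_\beta W_\beta)$ and at least one nontrivial cross pair), and to conclude that this union lies in $\tau^{<\alpha}(\F)$ you need the additivity of $\tau^{<\alpha}(\F)$ for limit $\alpha$ --- this is Theorem~\ref{ideal}, a substantive result whose proof uses torsion-freeness, and it is invoked nowhere in your sketch. Your substitute claim, that cross-terms lie in $\F$ ``because finite sets are small'', uses $\F_G\subset\F$, which is not among the hypotheses (and the paper also needs the normalization $e\notin A_n$ to kill the degenerate cross-terms). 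Finally, your successor-step assertion $W_\beta\cup gW_\beta\in\tau^{\beta+1}(\F)$ is itself a rank-induction of the same nature as the paper's Lemma~\ref{union2}; labelling it ``mechanical'' is acceptable for a sketch, but together with the points above the proposal does not yet constitute a proof.
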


We divide the proof of this theorem in a series of lemmas.

\begin{lemma}\label{l6.2} Let $h:H\to K$ be an isomorphism between subgroups of a group $G$, $\F$ be an $h$-invariant left-invariant lower  family of subsets of $G$. If a subset $A\subset H$ does not belong to $\tau^\alpha(\F)$ for some ordinal $\alpha$, then for every point $z\in G\setminus \{e\}$ the set  $h(A)\cup zh(A)\notin\tau^{\alpha+1}(\F)$.
\end{lemma}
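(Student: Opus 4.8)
\emph{Plan.} The plan is to prove the statement directly, by exhibiting a single pair of distinct elements of $G$ at which $B:=h(A)\cup zh(A)$ fails the membership test for $\tau^{\alpha+1}(\F)$. Recall that $\tau^{\alpha+1}(\F)=\tau(\tau^{<\alpha+1}(\F))$, so a set lies in $\tau^{\alpha+1}(\F)$ iff for \emph{every} pair of distinct $x,y\in G$ we have $xB\cap yB\in\tau^{<\alpha+1}(\F)$; hence it suffices to produce one pair of distinct $x,y$ with $xB\cap yB\notin\tau^{<\alpha+1}(\F)$.

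First I would record two routine preliminaries. (a)~The families $(\tau^\beta(\F))_\beta$ form an increasing chain (this follows from $\F\subset\tau(\F)$ together with the obvious monotonicity of the operator $\tau$), so $\tau^{<\alpha+1}(\F)=\bigcup_{\beta\le\alpha}\tau^\beta(\F)=\tau^\alpha(\F)$. (b)~Each family $\tau^\beta(\F)$ is left-invariant and lower — an easy transfinite induction already implicit in the paper — and, since $\F$ is $h$-invariant, Proposition~\ref{hinv} gives that $\tau^\alpha(\F)$ is $h$-invariant as well.

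The core of the argument is the choice $x=e$ and $y=z$, which are distinct precisely because $z\ne e$. Since $zB=zh(A)\cup z^2h(A)$, we have $eB\cap zB=B\cap zB\supseteq zh(A)$. Now, from $A\subset H$, $A\notin\tau^\alpha(\F)$ and the $h$-invariance of $\tau^\alpha(\F)$ we get $h(A)\notin\tau^\alpha(\F)$; left-invariance then yields $zh(A)\notin\tau^\alpha(\F)$ (if $zh(A)$ were in $\tau^\alpha(\F)$, applying $z^{-1}$ would force $h(A)\in\tau^\alpha(\F)$). Finally, because $\tau^\alpha(\F)$ is lower and $zh(A)\subseteq B\cap zB$, the set $B\cap zB$ cannot belong to $\tau^\alpha(\F)=\tau^{<\alpha+1}(\F)$; by the definition of $\tau^{\alpha+1}(\F)$ this witnesses $B=h(A)\cup zh(A)\notin\tau^{\alpha+1}(\F)$, as required.

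\emph{Expected difficulty.} There is no serious obstacle here: the whole argument rests on the one-line observation that $zh(A)$ is sandwiched inside $B\cap zB$, after which lowerness together with the $h$- and left-invariance of $\tau^\alpha(\F)$ finish the job. The only points needing (minor) care are verifying that the chain $\tau^\beta(\F)$ is monotone so that $\tau^{<\alpha+1}(\F)=\tau^\alpha(\F)$, and confirming that $\tau^\alpha(\F)$ genuinely inherits left-invariance, lowerness, and — via Proposition~\ref{hinv} — $h$-invariance.
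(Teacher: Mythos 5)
Your proof is correct and is essentially the paper's own argument: the paper observes $(h(A)\cup zh(A))\cap z^{-1}(h(A)\cup zh(A))\supset h(A)\notin\tau^\alpha(\F)$ (using Proposition~\ref{hinv}) and concludes by lowerness, which is exactly your containment $B\cap zB\supset zh(A)$ translated by $z$. The small bookkeeping points you flag (monotonicity of the chain giving $\tau^{<\alpha+1}(\F)=\tau^\alpha(\F)$, and left-invariance plus lowerness of $\tau^\alpha(\F)$) are indeed the only things implicitly used by the paper as well.
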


\begin{proof} Proposition~\ref{hinv} implies that $h(A)\notin \tau^\alpha(\F)$. Since $$(h(A)\cup zh(A))\cap z^{-1}(h(A)\cup zh(A))\supset h(A)\notin\tau^\alpha(\F),$$ the set $h(A)\cup zh(A)\notin \tau^{\alpha+1}(\F)$ by the definition of $\tau^{\alpha+1}(\F)$.
\end{proof}

In the following lemma for a subgroup $K$ of a group $H$ by 
$$Z_H(K)=\{z\in H:\forall x\in K\;\;zx=xz\}$$we denote the centralizer of $K$ in $H$.

\begin{lemma}\label{union2} Let $h:H\to K$ be an isomorphism between subgroups $K\subset H$ of a group $G$ such that there is a point $z\in Z_H(K)$ with $z^2\notin K$. Let $\F\subset\mathcal P_G$ be an $h$-invariant left-invariant ideal. If a subset $A\subset H$ belongs to the family $\tau^\alpha(\F)$ for some ordinal $\alpha$, then $h(A)\cup zh(A)\in\tau^{\alpha+1}(\F)$.
\end{lemma}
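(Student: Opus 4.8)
The plan is to prove the statement by analyzing the $\tau$-tree $T_{h(A)\cup zh(A)}$ and showing its rank is controlled by the rank of $T_A$, using the combinatorial machinery of Section~\ref{s2} (Theorem~\ref{t1.2} and the Claim describing $A_s$). By Theorem~\ref{t1.2} the hypothesis $A\in\tau^\alpha(\F)$ means $T_A$ is well-founded with $\rank(T_A)\le -1+\alpha+1$; I want to conclude $T_{h(A)\cup zh(A)}$ is well-founded with rank $\le -1+(\alpha+1)+1$, which gives $h(A)\cup zh(A)\in\tau^{\alpha+1}(\F)$. Write $B=h(A)\cup zh(A)$. The key point is that for a sequence $s=(g_1,\dots,g_n)\in G_\circ^{<\w}$, the set $B_s=\bigcap_{\mathbf x\in 2^n}\mathbf g^{\mathbf x}B$ decomposes via De Morgan into a union of $2^n$ intersections $\bigcap_{\mathbf x}\mathbf g^{\mathbf x}(zh(A))^{[\epsilon(\mathbf x)]}$ over colorings $\epsilon:2^n\to\{0,1\}$, where $C^{[0]}=h(A)$ and $C^{[1]}=zh(A)$; since $\F$ is an ideal, $B_s\in\F$ iff each of these $2^n$ pieces lies in $\F$.

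The main structural step is to show that each such piece either lies in $\F$ automatically or can be rewritten, using left-invariance of $\F$ and the $h$-invariance, in terms of a set of the form $h(A)_{s'}$ for a suitable sequence $s'$ over $H$ (obtained by applying $h^{-1}$). Here the hypotheses are crucial: $K\subset H$ so $h(A)\subset K\subset H$ and $zh(A)\subset zK\subset H$ (as $z\in Z_H(K)\subset H$), so all translates $\mathbf g^{\mathbf x}B$ that matter must be driven by $\mathbf g^{\mathbf x}\in H$, else the piece is empty and in $\F$; and $z\in Z_H(K)$ lets me commute $z$ past elements of $K$ when untangling the intersection, while $z^2\notin K$ is what forces the ``genuinely mixed'' colorings to produce empty intersections — if a piece uses both the $h(A)$ and $zh(A)$ options with incompatible $z$-parities, the intersection is trapped in a coset like $z^2K$ meeting $K$, hence empty. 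After this reduction, each nonempty piece is (a translate of) $h(A_{s'})$ for some $s'\in H_\circ^{<\w}$ with $|s'|\le n-1$ in the branch of $T_B$ that actually matters; by $h$-invariance and Proposition~\ref{hinv}, $h(A_{s'})\in\F\iff A_{s'}\in\F$, so the subtree of $T_B$ above the level-one vertices embeds into (finitely many copies of) $T_A$, bounding $\rank(T_B)$.

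Concretely, I would argue as follows. First observe $B\notin\F$ is possible (if $A\notin\F$), so handle the base: if $A\in\F$ then $B\in\bigvee_2\F\subset\F$ and we are done since $\F\subset\tau^{\alpha+1}(\F)$; so assume $A\notin\F$. Next, for each $g\in G_\circ$ analyze $B\cap gB = (h(A)\cap gh(A))\cup(h(A)\cap gzh(A))\cup(zh(A)\cap gh(A))\cup(zh(A)\cap gzh(A))$. Using $z\in Z_H(K)$ and $z^2\notin K$, check that unless $g\in K$ or $g=z^{\pm1}$ times something in $K$, most terms vanish; for the surviving cases rewrite each nonempty term as a left-translate of $h(A\cap xA)$ or $h(A)$ for an appropriate $x\in H_\circ$. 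This yields $T_B(g)$ isomorphic (as a tree) to a lower subtree of $T_{A}$ or $T_{A\cap xA}$ for each level-one vertex $g$, so $\rank_{T_B}(g)\le \rank(T_A)$, hence $\rank(T_B)\le\rank(T_A)+1\le -1+\alpha+1+1=-1+(\alpha+1)+1$, and Theorem~\ref{t1.2} finishes it.

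The hard part will be the bookkeeping in the De Morgan expansion of $B_s$ for general $s$: tracking exactly which colorings $\epsilon:2^n\to\{0,1\}$ give empty intersections (the role of $z^2\notin K$ and the commutativity $z\in Z_H(K)$) and verifying that the nonempty ones reduce cleanly to $h(A_{s'})$ with the length of $s'$ strictly smaller. I expect it is cleanest to do this not at the fully general level but inductively on $\alpha$ mirroring the proof of Theorem~\ref{t1.2}: assume the statement for all $\beta<\alpha$, and for the single step $B\cap gB$ reduce to earlier-established cases of the lemma (or of Lemma~\ref{l6.2} for the ``$\notin$'' direction), so that only the one-step translate $B\cap gB$ ever needs to be dismantled, not $B_s$ for long $s$. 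The commutativity and $z^2\notin K$ hypotheses are exactly what make that one-step analysis work, so isolating it there is the natural way to contain the combinatorial overhead.
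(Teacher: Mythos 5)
Your final plan is essentially the paper's proof: induction on $\alpha$, reducing everything to the one-step analysis of $(h(A)\cup zh(A))\cap y(h(A)\cup zh(A))$, splitting into the cases $y\in K$, $y\in zK$, $y\in z^{-1}K$ and the empty case otherwise, with $z\in Z_H(K)$ used to commute $z$ past $K$ and $z^2\notin K$ killing the mixed terms; in the case $y=h(x)\in K$ one gets $h(A\cap xA)\cup zh(A\cap xA)$ and invokes the lemma's inductive hypothesis for $A\cap xA\in\tau^\beta(\F)$, $\beta<\alpha$, exactly as in the paper. One caveat: your intermediate rank argument is not correct as stated --- for $y\in K$ the set $B\cap yB$ is again a union of the form $h(A')\cup zh(A')$ with $A'=A\cap xA$, so $T_B(y)$ is \emph{not} isomorphic to a lower subtree of $T_{A'}$ (its rank can exceed $\rank(T_{A'})$ by one; indeed Lemma~\ref{l6.2} shows the union genuinely increases the $\tau$-level), so the direct embedding $\rank_{T_B}(y)\le\rank(T_A)$ fails and must be replaced, as your last paragraph does, by applying the lemma itself at the smaller ordinal; with that replacement the argument is the paper's.
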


\begin{proof} By induction on $\alpha$. For $\alpha=0$ and $A\in\F$ the inclusion $h(A)\cup zh(A)\in\F\subset\tau(\F)$ follows from the $h$-invariance and the additivity of $\F$.

Now assume that for some ordinal $\alpha$ we have proved that for every $\beta<\alpha$ and $A\in\mathcal P_H\cap\tau^\beta(\F)$ the set $h(A)\cup zh(A)$ belongs to $\tau^{\beta+1}(\F)$.
Given any set $A\in\mathcal P_H\cap \tau^{\alpha}(\F)$,  we need to prove that $h(A)\cup zh(A)\in\tau^{\alpha+1}(\F)$. This will follow as soon as we check that $(h(A)\cup zh(A))\cap y(h(A)\cup zh(A)\in\tau^{\alpha}(\F)$ for every $y\in G\setminus\{e\}$.

If $y\notin K\cup zK\cup z^{-1}K$, then 
$$(h(A)\cup zh(A))\cap y(h(A)\cup zh(A))\subset (K\cup zK)\cap y(K\cup zK)=\emptyset\in\tau^{\alpha+1}(\F).$$

So, it remains to consider the case $y\in K\cup zK\cup z^{-1}K\subset H$.
If $y\in K$, then $$(h(A)\cup zh(A))\cap y(h(A)\cup zh(A))=(h(A)\cap yh(A))\cup z(h(A)\cap y\,h(A)).$$
Since $y\in K$, there is an element $x\in H$ with $y=h(x)$. Since $A\in\tau^{\alpha}(\F)$, $A\cap xA\in\tau^\beta(\F)$ for some $\beta<\alpha$ and then $$(h(A)\cup zh(A))\cap y(h(A)\cup zh(A))=h(A\cap xA)\cup zh(A\cap xA)\in\tau^{\beta+1}(\F)\subset\tau^\alpha(\F)$$ by the inductive assumption. 
If $y\in zK$, then $z^2\notin K$ implies that $$(h(A)\cup zh(A))\cap y(h(A)\cup zh(A))=zh(A)\cap yh(A)\subset
 zh(A)\in\tau^\alpha(\F)$$ by the $h$-invariance and the left-invariance of the family $\tau^\alpha(\F)$, see Proposition~\ref{hinv}.

If $y\in z^{-1}K$, then by the same reason,
$$(h(A)\cup zh(A))\cap y(h(A)\cup zh(A))=h(A)\cap yzh(A)\subset h(A)\in\tau^\alpha(\F).$$
\end{proof}

Given an isomorphism $h:H\to K$ between subgroups $K\subset H$ of a group $G$, for every $n\in\IN$ define the iteration $h^n:H\to K$ of the isomorphism $h$ letting $h^1=h:H\to K$ and $h^{n+1}=h\circ h^n$ for $n\ge 1$.

The isomorphism $h:H\to K$ will be called {\em expanding} if $\bigcap_{n\in\IN}h^n(H)=\{e\}$.

\begin{example} For every integer $k\ge 2$ the isomorphism $$h_k:\IZ\to k\IZ,\;\;h_k:x\mapsto kx,$$ is expanding.
\end{example} 

\begin{lemma}\label{union} Let $h:H\to K$ be an expanding isomorphism between torsion-free subgroups $K\subset H$ of a group $G$ and $\F\subset\mathcal P_G$ be an $h$-invariant left-invariant ideal of subsets of $G$. For any limit ordinal $\alpha$ and family $\{A_n\}_{n\in\w}\subset\tau^{<\alpha}(\F)$ of subsets of the group $H$, the union $A=\bigcup_{n\in\w}h^n(A_n)$ belongs to the family $\tau^{\alpha}(\F)$.
\end{lemma}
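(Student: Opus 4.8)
The plan is to verify the defining condition of $\tau^\alpha(\F)$ directly: for distinct $x,y\in G$ I must show $xA\cap yA\in\tau^{<\alpha}(\F)$, and since $xA\cap yA=x(A\cap x^{-1}yA)$ and $\tau^{<\alpha}(\F)$ is left-invariant, it suffices to treat $A\cap zA$ for an arbitrary $z\in G\setminus\{e\}$. I first record two easy facts: $A=\bigcup_{n\in\w}h^n(A_n)$ is contained in $\bigcup_n h^n(H)\subseteq H$, the subgroups $h^n(H)$ form a decreasing chain (because $K\subseteq H$), and $\bigcap_n h^n(H)=\{e\}$ since $h$ is expanding; hence there is a least $N\in\w$ with $z\notin h^N(H)$.

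The crucial combinatorial point is that $h^n(A_n)\cap zh^m(A_m)=\emptyset$ whenever $n,m\ge N$: a common element would give $g\in h^n(H)$ and $z^{-1}g\in h^m(H)$, and since one of $h^n(H),h^m(H)$ contains the other, this forces $z\in h^{\min(n,m)}(H)\subseteq h^N(H)$, a contradiction. Intersecting the two expansions $A=\bigcup_n h^n(A_n)$ and $zA=\bigcup_m zh^m(A_m)$ and discarding the empty terms gives
$$A\cap zA\ \subseteq\ \bigcup_{n<N}h^n(A_n)\ \cup\ \bigcup_{m<N}zh^m(A_m).$$
For each $n<N$ we have $z\in h^n(H)$, so $z=h^n(x_n)$ for a unique $x_n\in H\setminus\{e\}$, whence $zh^n(A_n)=h^n(x_nA_n)$; thus the right-hand side is the \emph{finite} union $\bigcup_{n<N}h^n(A_n\cup x_nA_n)$ of subsets of $H$.

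It remains to show this finite union belongs to $\tau^{<\alpha}(\F)$; lowerness of $\tau^{<\alpha}(\F)$ then yields $A\cap zA\in\tau^{<\alpha}(\F)$, and we are done. For this I pass to the torsion-free subgroup $H$. One checks by a routine induction on the ordinal $\beta$ that the $\tau$-hierarchy relativises, i.e. $\tau^\beta(\F)\cap\mathcal P_H=\tau^\beta_H(\F|_H)$, where $\F|_H:=\F\cap\mathcal P_H$ is a left-invariant ideal on $H$ and $\tau_H$ is the operator computed inside $H$; the induction uses only that for $B\subseteq H$ and distinct $x,y\in G$ the set $xB\cap yB$ is empty unless $x^{-1}y\in H$, together with left-invariance of the families involved. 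Also $\F$ is $h^n$-invariant for every $n$ (since it is $h$-invariant and $h(H)=K\subseteq H$), so by Proposition~\ref{hinv} so is each $\tau^\beta(\F)$. Now each $A_n$ lies in $\tau^{\gamma_n}(\F)$ for some $\gamma_n<\alpha$, and only the indices $n<N$ occur, so $\beta:=\max_{n<N}\gamma_n<\alpha$ and all the sets $h^n(A_n),\ zh^n(A_n)$ ($n<N$) lie in $\tau^\beta_H(\F|_H)$. Since $H$ is torsion-free, Theorem~\ref{ideal} applies to $\F|_H$ and shows $\tau^{<\beta+\w}_H(\F|_H)$ is additive; as $\beta+\w$ is a limit ordinal $\le\alpha$, this additive family contains $\tau^\beta_H(\F|_H)$, hence absorbs the finite union, placing it in $\tau^{<\beta+\w}_H(\F|_H)=\tau^{<\beta+\w}(\F)\cap\mathcal P_H\subseteq\tau^{<\alpha}(\F)$.

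The main obstacle is precisely this last additivity step. Theorem~\ref{ideal} requires the ambient group to be torsion-free, whereas $G$ itself need not be; the remedy is to work inside $H$, and the reason this is legitimate is exactly the ``cut at level $N$'' step, which collapses the infinite union defining $A$ to a finite union of subsets of $H$ before additivity is invoked. Everything else — the relativisation of the $\tau$-hierarchy and the bookkeeping with $h^n$-invariance — is routine.
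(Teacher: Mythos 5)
Your proof is correct, and its core is the same as the paper's: use the expanding property to find the least $N$ with $z\notin h^N(H)$, observe that pieces $h^n(A_n)$ and $zh^m(A_m)$ with $n,m\ge N$ are disjoint, so $A\cap zA$ is covered by the finite union $\bigcup_{n<N}h^n(A_n)\cup\bigcup_{n<N}zh^n(A_n)$, and finish by additivity of a $\tau^{<\,\cdot}$-family together with lowerness. Where you genuinely diverge is the justification of that additivity step. The paper simply cites Theorem~\ref{ideal} to say $\tau^{<\alpha}(\F)$ is additive; but Theorem~\ref{ideal} is stated for a torsion-free group $G$, whereas the lemma only assumes the subgroups $H,K$ are torsion-free, so the paper's citation is literally valid only under an extra hypothesis (harmless in its intended applications, where the lemma is invoked for a free abelian ambient group). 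You instead relativize: you check that all the finitely many pieces are subsets of $H$ (using $z\in h^n(H)$ for $n<N$), prove the levelwise identity $\tau^\beta(\F)\cap\mathcal P_H=\tau^\beta_H(\F\cap\mathcal P_H)$ (the same relativization the paper itself asserts in the proof of Theorem~\ref{t5.1}), and then apply Theorem~\ref{ideal} inside the torsion-free group $H$ to the ideal $\F\cap\mathcal P_H$, landing in $\tau^{<\beta+\w}(\F)\cap\mathcal P_H\subset\tau^{<\alpha}(\F)$. So your argument buys the lemma in the generality in which it is actually stated, at the cost of the extra (routine) relativization and $h^n$-invariance bookkeeping; the paper's version is shorter but, as written, leans on additivity of $\tau^{<\alpha}(\F)$ over all of $G$, which its own Theorem~\ref{ideal} does not cover when $G$ has torsion.
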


\begin{proof} First observe that $\{h^n(A_n)\}_{n\in\w}\subset\tau^{<\alpha}(\F)$ by Proposition~\ref{hinv}. To show that $A=\bigcup_{n\in\w}h^n(A_n)\in\tau^{\alpha}(\F)$ we need to check that $A\cap xA\in\tau^{<\alpha}(\F)$ for all $x\in G\setminus\{e\}$. This is trivially true if $x\notin H$ as $A\subset H$. So, we assume that $x\in H$. By the expanding property of the isomorphism $h$, there is a number $m\in\w$ such that $x\notin h^m(H)$. Put $B=\bigcup_{n=0}^{m-1}h^n(A_n)$ and observe that  $A\cap xA\subset B\cup xB\in\tau^{<\alpha}(\F)$ as $\tau^{<\alpha}(\F)$ is additive according to Theorem~\ref{ideal}.  
\end{proof}

\begin{lemma}\label{l6.6} Assume that a left-invariant ideal $\F$ on a group $G$ is $h$-invariant for some expanding isomorphism $h:H\to K$ between torsion-free subgroups $K\subset H$ of $G$ such that $Z_H(K)\not\subset K$. If $\tau(\F)\cap\mathcal P_H\not\subset\F$, then $\tau^{\alpha}(\F)\ne\tau^{<\alpha}(\F)$ for all ordinals $\alpha<\omega_1$. 
\end{lemma}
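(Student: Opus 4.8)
The plan is to show, by transfinite induction on $\alpha<\w_1$, that there is a subset $A_\alpha\subseteq H$ with $A_\alpha\in\tau^\alpha(\F)\setminus\tau^{<\alpha}(\F)$; the existence of such a witness immediately yields $\tau^\alpha(\F)\ne\tau^{<\alpha}(\F)$. In the inductive construction a set is pushed up one level with the help of Lemma~\ref{union2}, and across a limit level with the help of Lemma~\ref{union}.

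Before starting the induction I would normalize the data so that Lemma~\ref{union2} becomes applicable. Since $Z_H(K)\not\subset K$, fix $z\in Z_H(K)\setminus K$; as $H$ is torsion-free and $z\ne e$, we have $z^2\ne e$, so the expanding property of $h$ yields an $N\in\IN$ with $z^2\notin h^N(H)$. Then $h^N\colon H\to h^N(H)$ is again an expanding isomorphism between torsion-free subgroups of $G$, the ideal $\F$ is $h^N$-invariant, and $z\in Z_H(h^N(H))\setminus h^N(H)$ satisfies $z^2\notin h^N(H)$. Hence, after replacing $h$ by $h^N$ and $K$ by $h^N(H)$, we may assume that there is $z\in Z_H(K)$ with $z^2\notin K$. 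This normalization is, I expect, the main obstacle of the argument: for the original $h$ such an element of $Z_H(K)$ need not exist (already for $h_2\colon\IZ\to 2\IZ$ it does not), and it is precisely the expanding hypothesis that recovers it after iterating $h$.

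Now the induction. For $\alpha=0$ take $A_0=\emptyset\in\F=\tau^0(\F)$, and for $\alpha=1$ the hypothesis $\tau(\F)\cap\mathcal P_H\not\subset\F$ provides $A_1\subseteq H$ with $A_1\in\tau(\F)\setminus\F=\tau^1(\F)\setminus\tau^{<1}(\F)$. At a successor step, given $A_\beta\subseteq H$ with $A_\beta\in\tau^\beta(\F)\setminus\tau^{<\beta}(\F)$, put $A_{\beta+1}=h(A_\beta)\cup z\,h(A_\beta)$, which is a subset of $H$ because $z\in Z_H(K)\subseteq H$. Then $A_{\beta+1}\in\tau^{\beta+1}(\F)$ by Lemma~\ref{union2}. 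For the opposite containment, observe that $\tau^{<\beta}(\F)$ is $h$-invariant by Proposition~\ref{hinv}, so $h(A_\beta)\notin\tau^{<\beta}(\F)$; since $h(A_\beta)$ is contained in $A_{\beta+1}\cap z^{-1}A_{\beta+1}$ and $z^{-1}\ne e$, lowerness of $\tau^{<\beta}(\F)$ forces $A_{\beta+1}\notin\tau^\beta(\F)=\tau^{<\beta+1}(\F)$ (compare the proof of Lemma~\ref{l6.2}).

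At a limit ordinal $\alpha<\w_1$, choose $\beta_n\nearrow\alpha$ with $1\le\beta_n<\alpha$ and set $A_\alpha=\bigcup_{n\in\w}h^n(A_{\beta_n})\subseteq H$. By Lemma~\ref{union}, applied to the countable family $\{A_{\beta_n}\}_{n\in\w}\subseteq\tau^{<\alpha}(\F)$ of subsets of $H$, we get $A_\alpha\in\tau^\alpha(\F)$. If we had $A_\alpha\in\tau^\gamma(\F)$ for some $\gamma<\alpha$, then, since $\tau^\gamma(\F)$ is lower and $h$-invariant (hence $h^n$-invariant) by Proposition~\ref{hinv}, from $A_\alpha\supseteq h^n(A_{\beta_n})$ we would get $A_{\beta_n}\in\tau^\gamma(\F)\subseteq\tau^{<\beta_n}(\F)$ for any $n$ with $\beta_n>\gamma$, contradicting the choice of $A_{\beta_n}$. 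Hence $A_\alpha\notin\tau^{<\alpha}(\F)$, which closes the induction and proves the lemma.
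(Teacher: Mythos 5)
Your proposal is correct and follows essentially the same route as the paper: the same normalization replacing $h$ by an iterate $h^N$ so that $z^2\notin K$, the same successor step via Lemma~\ref{union2} together with the intersection-with-$z^{-1}$ argument (which is exactly the proof of Lemma~\ref{l6.2}), and the same limit step using $A=\bigcup_{n\in\w}h^n(A_{\beta_n})$ and Lemma~\ref{union}. Your inlined version of the lower-bound argument, deriving $A_{\beta+1}\notin\tau^{\beta}(\F)$ directly from $A_\beta\notin\tau^{<\beta}(\F)$, is in fact slightly cleaner than the paper's literal citation of Lemma~\ref{l6.2}, but it is the same idea.
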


\begin{proof} Fix any point $z\in Z_K(H)\setminus K$. Since $H$ is torsion-free, $z^2\ne e$. Since the isomorphism $h$ is expanding, $z^2\notin h^m(H)$ for some $m\in\IN$. Replacing the isomorphism $h$ by its iterate $h^m$, we lose no generality assuming that $z^2\notin h(H)=K$.  

By induction on $\alpha<\w_1$ we shall prove that $
\tau^{\alpha}(\F)\cap\mathcal P_H\ne\tau^{<\alpha}(\F)\cap\mathcal P_H.
$ 

For $\alpha=1$ the non-equality $\tau(\F)\cap\mathcal P_H\ne\tau^0(\F)\cap\mathcal P_H$ is included into the hypothesis. Assume that for some ordinal $\alpha<\w_1$ we proved that
$\tau^{\beta}(\F)\cap\mathcal P_H\ne\tau^{<\beta}(\F)\cap\mathcal P_H$ for all ordinals $\beta<\alpha$.

If $\alpha=\beta+1$ is a successor ordinal, then by the inductive assumption we can find a set $A\in\tau^{\beta}(\F)\setminus\tau^{<\beta}(\F)$ in the subgroup $H$.
By Lemmas~\ref{l6.2} and \ref{union2}, $A\cup zA\in\tau^{\beta+1}(\F)\setminus\tau^{\beta}(\F)=\tau^\alpha(\F)\setminus\tau^{<\alpha}(\F)$ and we are done.

If $\alpha$ is a limit ordinal, then we can find an increasing sequence of ordinals $(\alpha_n)_{n\in\w}$ with $\alpha=\sup_{n\in\w}\alpha_n$. By the inductive assumption, for every $n\in\w$ there is a subset $A_n\subset H$ with $A_n\in\tau^{\alpha_n+1}(\F)\setminus\tau^{\alpha_n}(\F)$. Then we can put $A=\bigcup_{n\in\w}h^n(A_n)$. By Proposition~\ref{hinv}, for every $n\in\w$, we get  $$h^n(A_n)\in\tau^{\alpha_n+1}(\F)\setminus\tau^{\alpha_n}(\F)$$ and thus $A\notin\tau^{\alpha_n}(\F)$ for all $n\in\w$, which implies that $A\notin \tau^{<\alpha}(\F)$. On the other hand, Lemma~\ref{union} guarantees that $A\in\tau^{\alpha}(\F)$.
\end{proof}

\begin{lemma}\label{l6.7} Assume that a left-invariant ideal $\F$ on a group $G$ is $h$-invariant for some isomorphism $h:H\to K$ between torsion-free subgroups $K\subset H$ of $G$ such that $z^2\notin K$ for some $z\in Z_K(H)$. Assume that for an infinite cardinal $\kappa$ there are isomorphisms $h_n:H\to H_n$, $n\in\kappa$, onto subgroups $H_n\subset H$ such that $\F$ is $h_n$-invariant and $H_n\cdot H_m\cap H_k\cdot H_l=\{e\}$ for all indices $n,m,k,l\in\kappa$ with $\{n,m\}\cap\{k,l\}=\emptyset$. 

 If   $\tau(\F)\cap\mathcal P_H\not\subset\F$, then $\tau^{\alpha}(\F)\ne\tau^{<\alpha}(\F)$ for all ordinals $\alpha<\kappa^+$. 
\end{lemma}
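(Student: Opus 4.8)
The plan is to prove, by transfinite induction on $\alpha<\kappa^+$, the sharper statement that $\tau^{\alpha}(\F)\cap\mathcal P_H\ne\tau^{<\alpha}(\F)\cap\mathcal P_H$; for $\alpha=1$ this is literally the hypothesis $\tau(\F)\cap\mathcal P_H\not\subset\F$ (bearing in mind $\F\subset\tau(\F)$), and $\alpha=0$ is trivial, so the work lies entirely in the successor and limit steps. Before starting I would record three routine facts. First, by Proposition~\ref{hinv} every $\tau^{\gamma}(\F)$, and hence every $\tau^{<\gamma}(\F)$, is $h$-invariant and $h_n$-invariant. Second, the hierarchy is monotone in the index, so $\tau^{<\beta+1}(\F)=\tau^{\beta}(\F)$. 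Third, the operators commute with restriction to the subgroup $H$: writing $\tau^{\gamma}_G$ and $\tau^{\gamma}_H$ for the hierarchies formed inside $G$ and inside $H$, one has $\tau^{\gamma}_G(\mathcal G)\cap\mathcal P_H=\tau^{\gamma}_H(\mathcal G\cap\mathcal P_H)$ for every left-invariant lower family $\mathcal G$, which follows by an easy induction from the observation that $A\cap xA=\emptyset$ whenever $A\subset H$ and $x\in G\setminus H$. Applied to $\mathcal G=\F$ and combined with Theorem~\ref{ideal} inside the torsion-free group $H$, this yields that $\tau^{<\alpha}(\F)\cap\mathcal P_H$ is additive for every limit ordinal $\alpha$ — the additivity that makes the limit step go through (note that $G$ itself is not assumed torsion-free, so this detour through $H$ seems unavoidable).

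For the successor step $\alpha=\beta+1$ I would follow Lemma~\ref{l6.6} almost verbatim. Using the inductive hypothesis, pick $A\subset H$ with $A\in\tau^{\beta}(\F)\setminus\tau^{<\beta}(\F)$ and set $B=h(A)\cup zh(A)$; since $z\in Z_K(H)\subset H$ we have $B\subset H$. Lemma~\ref{union2} gives $B\in\tau^{\beta+1}(\F)$, and the argument of Lemma~\ref{l6.2} gives $B\notin\tau^{\beta}(\F)$: indeed $B\cap z^{-1}B\supset h(A)$, and $h(A)\notin\tau^{<\beta}(\F)$ by $h$-invariance, so $B\cap z^{-1}B\notin\tau^{<\beta}(\F)$ because that family is lower, whence $B\notin\tau(\tau^{<\beta}(\F))=\tau^{\beta}(\F)$. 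Thus $B$ witnesses $\tau^{\beta+1}(\F)\cap\mathcal P_H\ne\tau^{\beta}(\F)\cap\mathcal P_H=\tau^{<\beta+1}(\F)\cap\mathcal P_H$, as required.

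The limit step is where the isomorphisms $\{h_n\}_{n\in\kappa}$ are used and where the real difficulty lies. Put $\lambda=\mathrm{cf}(\alpha)$; since $\alpha<\kappa^+$ we have $\lambda\le\kappa$, so I can fix a strictly increasing cofinal sequence $(\alpha_i)_{i<\lambda}$ in $\alpha$ and, after re-indexing, isomorphisms $h_i:H\to H_i$, $i<\lambda$, with the stated independence property. Using the inductive hypothesis at the ordinals $\alpha_i+1<\alpha$, choose $A_i\subset H$ with $A_i\in\tau^{\alpha_i+1}(\F)\setminus\tau^{\alpha_i}(\F)$, and put $A=\bigcup_{i<\lambda}h_i(A_i)\subset H$. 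That $A\notin\tau^{<\alpha}(\F)$ is the easy half: by $h_i$-invariance $h_i(A_i)\notin\tau^{\alpha_i}(\F)$, so $A\supset h_i(A_i)$ forces $A\notin\tau^{\alpha_i}(\F)$ by lowerness, and cofinality of $(\alpha_i)$ then excludes $A\in\tau^{\gamma}(\F)$ for every $\gamma<\alpha$. The substantive half is $A\in\tau^{\alpha}(\F)$, that is, $A\cap xA\in\tau^{<\alpha}(\F)$ for all $x\in G\setminus\{e\}$; the case $x\notin H$ is immediate, so assume $x\in H\setminus\{e\}$. Call a pair $\{i,j\}\subset\lambda$ (with possibly $i=j$) \emph{$x$-active} if $x\in H_iH_j$. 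A point $y\in A\cap xA$ lies in $h_i(A_i)\cap xh_j(A_j)$ for some $i,j$, and non-emptiness of this set forces $x\in H_iH_j^{-1}=H_iH_j$, so only $x$-active pairs contribute. By the independence hypothesis two disjoint $x$-active pairs would force $x\in H_iH_j\cap H_kH_l=\{e\}$, impossible for $x\ne e$; hence the $x$-active pairs form a pairwise-intersecting family of sets of size at most two. Such a family either has an element $a$ common to all its members — in which case every contributing $y$ has $i=a$ or $j=a$, so $A\cap xA\subset h_a(A_a)\cup xh_a(A_a)$ — or its union is a set $F$ with $|F|\le 3$, in which case $A\cap xA\subset\bigcup_{i\in F}h_i(A_i)$. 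In either case $A\cap xA$ is contained in a union of at most three of the sets $h_i(A_i)$ (each in $\tau^{\alpha_i+1}(\F)\subset\tau^{<\alpha}(\F)$) together with one translate $xh_a(A_a)\in\tau^{<\alpha}(\F)$, all of them subsets of $H$; by the additivity of $\tau^{<\alpha}(\F)\cap\mathcal P_H$ from the first paragraph and the lowerness of $\tau^{<\alpha}(\F)$, this gives $A\cap xA\in\tau^{<\alpha}(\F)$. Hence $A$ witnesses $\tau^{\alpha}(\F)\cap\mathcal P_H\ne\tau^{<\alpha}(\F)\cap\mathcal P_H$, completing the induction.

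I expect the main obstacle to be precisely this verification that $A\cap xA\in\tau^{<\alpha}(\F)$ in the limit step: one must see that the independence condition $H_nH_m\cap H_kH_l=\{e\}$ forces, for each fixed $x$, only boundedly many coordinates $i<\lambda$ to be relevant — via the dichotomy for pairwise-intersecting families of two-element sets — and, crucially, that in the ``common element'' branch of that dichotomy the right containment is $A\cap xA\subset h_a(A_a)\cup xh_a(A_a)$, a union of only two members, rather than a union over all the (possibly $\lambda$-many) active pairs through $a$. The secondary technical point, already handled above, is that the additivity of $\tau^{<\alpha}(\F)$ used here must be imported from the torsion-free group $H$ through the restriction identity, since $G$ need not be torsion-free.
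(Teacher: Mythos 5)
Your proof is correct and follows essentially the same route as the paper's: the same induction on $\alpha$, the same successor-step witness $h(A)\cup zh(A)$ via Lemmas~\ref{l6.2} and \ref{union2}, and the same limit-step set $A=\bigcup_i h_i(A_i)$ controlled by the hypothesis $H_nH_m\cap H_kH_l=\{e\}$. Your two refinements --- importing the additivity of $\tau^{<\alpha}(\F)\cap\mathcal P_H$ through the restriction identity to the torsion-free group $H$ (the paper simply cites Theorem~\ref{ideal}, which formally assumes the ambient group torsion-free), and the intersecting-pairs dichotomy in place of the paper's simpler observation that $A\cap xA\subset h_n(A_n)\cup h_m(A_m)\cup xh_n(A_n)\cup xh_m(A_m)$ for a single witnessing pair $\{n,m\}$ --- are sound but do not change the approach.
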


\begin{proof}  By induction on $\alpha<\kappa^+$ we shall prove that $
\tau^{\alpha}(\F)\cap\mathcal P_H\ne\tau^{<\alpha}(\F)\cap\mathcal P_H.
$ 

For $\alpha=1$ the non-equality $\tau^1(\F)\cap\mathcal P_H\ne\tau^0(\F)\cap\mathcal P_H$ is included into the hypothesis. Assume that for some ordinal $\alpha<\kappa^+$ we proved that
$\tau^{\beta}(\F)\cap\mathcal P_H\ne\tau^{<\beta}(\F)\cap\mathcal P_H$ for all ordinals $\beta<\alpha$.

If $\alpha=\beta+1$ is a successor ordinal, then by the inductive assumption we can find a set $A\in\tau^{\beta}(\F)\setminus\tau^{<\beta}(\F)$ in the subgroup $H$.
By Lemmas~\ref{l6.2} and \ref{union2}, $h(A)\cup zh(A)\in\tau^{\beta+1}(\F)\setminus\tau^{\beta}(\F)$ and we are done.

If $\alpha$ is a limit ordinal, then we can fix a family of ordinals $(\alpha_n)_{n\in\kappa}$ with $\alpha=\sup_{n\in\kappa}(\alpha_{n}+1)$. By the inductive assumption, for every $n\in\kappa$ there is a subset $A_n\subset H$ such that $A_n\in\tau^{\alpha_n+1}(\F)\setminus\tau^{\alpha_n}(\F)$. After a suitable shift, we can assume that $e\notin A_n$.
Since the ideal $\F$ is $h_n$-invariant, $h_n(A_n)\in\tau^{\alpha_n+1}(\F)\setminus\tau^{\alpha_n}(\F)$ according to Lemma~\ref{hinv}.

Then the set $A=\bigcup_{n\in\w}h_n(A_n)$ does not belong to $\tau^{<\alpha}(\F)$. The inclusion $A\in\tau^{\alpha}(\F)$ will follow as soon as we check that  $A\cap xA\in\tau^{<\alpha}(\F)$ for all $x\in G\setminus \{e\}$. This is clear if $A\cap xA$ is empty. If $A\cap xA$ is not empty, then $x\in h_n(A_n)h_m(A_m)^{-1}\subset H_nH_m$ for some $n,m\in\kappa$. Taking into account that $H_nH_m\cap H_kH_l=\{e\}$ for all $k,l\in\kappa\setminus\{n,m\}$ and $e\notin A$, we conclude that $$A\cap xA\subset h_n(A_n)\cup h_m(A_m)\cup xh_n(A_n)\cup xh_m(A_m)\in\tau^{<\alpha}(\F)$$ as $\tau^{<\alpha}(\F)$ is additive according to Theorem~\ref{ideal}.
\end{proof} 

Let us recall that a family $\F$ of subsets of a group $G$ is called {\em auto-invariant} if for any injective homomorphism $h:G\to G$ a subset $A\subset G$ belongs to $\F$ if and only if $h(A)\in\F$.

\begin{lemma}\label{l5.8} Let $G$ be a free abelian group $G$ and $\F$ be an auto-invariant ideal of subsets of $G$. If $\F$ is not thin-complete, then for each ordinal $\alpha<|G|^+$ the family  $\tau^\alpha(\F)$ is not thin-complete.
\end{lemma}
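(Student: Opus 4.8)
The plan is to realize the hypotheses of Lemma~\ref{l6.7} inside the free abelian group $G$, so that its conclusion gives the non-stabilization $\tau^\alpha(\F)\ne\tau^{<\alpha}(\F)$ for all $\alpha<|G|^+$, and hence in particular $\tau^\alpha(\F)\ne\tau^*(\F)$, i.e. $\tau^\alpha(\F)$ is not thin-complete. First I would record what ``not thin-complete'' means here: since $\F$ is auto-invariant, by the corollaries in Section~\ref{s:hinv} so is every $\tau^\alpha(\F)$, and $\tau(\tau^\alpha(\F))=\tau^{\alpha+1}(\F)$ (directly from the hierarchy definition, because $\tau^{<\alpha+1}(\F)=\tau^\alpha(\F)$ once the families are increasing); so $\tau^\alpha(\F)$ fails to be thin-complete exactly when $\tau^{\alpha+1}(\F)\ne\tau^\alpha(\F)$. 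Thus it suffices to show $\tau^{\beta+1}(\F)\ne\tau^\beta(\F)$ for all $\beta<|G|^+$, which is a consequence of $\tau^{\alpha}(\F)\ne\tau^{<\alpha}(\F)$ for all $\alpha<|G|^+$.

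Next I would set up the subgroup data. Write $G\cong\bigoplus^{\kappa}\IZ$ with $\kappa=|G|$, fix a basis $(e_i)_{i\in\kappa}$, and choose $H$ to be a direct summand isomorphic to $G$ itself but ``with room to spare'': concretely, split $\kappa$ into $\kappa$ many pairwise disjoint infinite pieces $(\Lambda_n)_{n\in\kappa}$ together with one extra infinite piece $\Lambda_\infty$, and let $H=\bigoplus_{i\in\Lambda_\infty}\IZ e_i$. The isomorphism $h:H\to K$ with $K\subset H$ is obtained by shifting $H$ into a proper summand of itself (so $H$ is torsion-free, $K\subsetneq H$, and one easily arranges a $z\in H=Z_H(K)$ — the group is abelian — with $z^2\notin K$, e.g. $z$ a basis vector not in the range of the shift). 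The maps $h_n:H\to H_n$, for $n\in\kappa$, are fixed isomorphisms of $H\cong\bigoplus^{\aleph_0}\IZ$ onto $H_n:=\bigoplus_{i\in\Lambda_n}\IZ e_i$. Because the index sets $\Lambda_n$ are pairwise disjoint and the $H_n$ are independent direct summands, $H_nH_m\cap H_kH_l=\{e\}$ whenever $\{n,m\}\cap\{k,l\}=\emptyset$; and $\F$, being auto-invariant, is $h$-invariant and $h_n$-invariant for every $n$, since $h$ and each $h_n$ extend to injective endomorphisms of $G$ (using the spare coordinates $\kappa\setminus\Lambda_\infty$ resp.\ any injective self-map of $\kappa$ carrying $\Lambda_\infty$ onto $\Lambda_n$).

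Finally I would verify the remaining hypothesis of Lemma~\ref{l6.7}, namely $\tau(\F)\cap\mathcal P_H\not\subset\F$. Since $\F$ is not thin-complete, $\tau(\F)\setminus\F\ne\emptyset$; pick $B\in\tau(\F)\setminus\F$ and use auto-invariance of both $\F$ and $\tau(\F)$ to transport $B$ into $H$: fix an injective homomorphism $g:G\to G$ with $g(G)\subset H$ (possible because $H\cong G$ is a direct summand), then $g(B)\in\tau(\F)\cap\mathcal P_H$ while $g(B)\notin\F$ by $h$-invariance of $\F$ for the isomorphism $g:G\to g(G)$ (which extends to an injective endomorphism of $G$). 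Applying Lemma~\ref{l6.7} with this $\kappa=|G|$ now yields $\tau^\alpha(\F)\ne\tau^{<\alpha}(\F)$ for all $\alpha<\kappa^+=|G|^+$, completing the proof. The only real point requiring care is the bookkeeping that guarantees simultaneously: $H\cong G$ (so it can absorb any ``witness'' from $\tau(\F)\setminus\F$), a proper expanding-type self-embedding $h:H\to K\subsetneq H$ with the element $z$ as above, and $\kappa$ many mutually independent copies $H_n$ of $H$ inside $H$ with the stated intersection property — all of which is arranged at once by the splitting $\kappa=\Lambda_\infty\sqcup\bigsqcup_{n\in\kappa}\Lambda_n$ into infinite pieces.
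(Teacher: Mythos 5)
Your opening reduction (that $\tau^\alpha(\F)$ fails to be thin-complete exactly when $\tau^{\alpha+1}(\F)\ne\tau^{\alpha}(\F)$, so non-stabilization of the hierarchy suffices) is correct, but the way you instantiate Lemma~\ref{l6.7} does not satisfy its hypotheses. That lemma requires the isomorphisms $h_n:H\to H_n$ to be onto subgroups $H_n\subset H$, and this is used essentially in its proof: the witness produced at a limit stage is $A=\bigcup_n h_n(A_n)$, and the induction there only closes because $A$ again lies in $\mathcal P_H$, so that at the next successor stage Lemmas~\ref{l6.2} and~\ref{union2} can be applied to a set contained in the domain $H$ of $h:H\to K$. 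In your configuration $H=\bigoplus_{i\in\Lambda_\infty}\IZ e_i$ while $H_n=\bigoplus_{i\in\Lambda_n}\IZ e_i$ with $\Lambda_n$ disjoint from $\Lambda_\infty$, so $H_n\cap H=\{0\}$ and the hypothesis $H_n\subset H$ fails; the ``room to spare'' is precisely what breaks the application (there is also a cardinality slip: you need each $h_n:H\to H_n$ to be an isomorphism and simultaneously $H\cong G$, which forces $|\Lambda_\infty|=|\Lambda_n|=\kappa$, not $H\cong\bigoplus^{\aleph_0}\IZ$). The paper's proof avoids all of this by taking $H=G$ itself: $h:G\to 3G$, $z$ a basis element (so $z^2\notin 3G$), and $H_n$ the subsums over a partition of the basis into $\kappa$ pieces of size $\kappa$, so $H_n\subset H$ trivially and no transport of a witness is needed, since $\tau(\F)\cap\mathcal P_H\not\subset\F$ is then just the hypothesis $\tau(\F)\ne\F$.

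The second, independent gap is that your construction tacitly assumes the free rank of $G$ equals $|G|$, i.e.\ that the basis splits into $|G|$ many infinite pieces. This holds for uncountable free abelian groups but fails for countable ones of finite rank, most importantly $G=\IZ$, which is exactly the case needed when Lemma~\ref{l5.8} is invoked in the proof of Theorem~\ref{t5.1} (the subgroup there may be infinite cyclic). The paper handles all countable free abelian groups by Lemma~\ref{l6.6}, using that $h:G\to 3G$ is an \emph{expanding} isomorphism; your proposal never uses Lemma~\ref{l6.6} or the expanding property and has no substitute for this case. The argument can be repaired by taking $H=G$, invoking Lemma~\ref{l6.6} when $G$ is countable, and invoking Lemma~\ref{l6.7} with internal copies $H_n\subset G$ only when $|G|$ is uncountable --- which is the paper's proof.
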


\begin{proof} Being free abelian, the group $G$ is generated by some linearly independent subset $B\subset G$. Consider the isomorphism $h:G\to 3G$ of $G$ onto the subgroup $3G=\{g^3:g\in G\}$ and observe that $h$ is expanding and for each $z\in B$ we get $z^2\notin 3G$. The ideal $\F$ being auto-invariant, is $h$-invariant. Applying Lemma~\ref{l6.6}, we conclude that $\tau^{\alpha}(\F)\ne\tau^{<\alpha}(\F)$ for all ordinals $\alpha<\w_1$. If the group $G$ is countable, then this is exactly what we need.

Now consider the case of uncountable $\kappa=|G|$. Being free abelian, the group $G$ is isomorphic to the direct sum $\oplus^\kappa\IZ$ of $\kappa$-many copies of the infinite cyclic group $\IZ$. Write the cardinal $\kappa$ as the disjoint union 
$\kappa=\bigcup_{\alpha\in\kappa}\kappa_\alpha$ of $\kappa$ many subsets $\kappa_\alpha\subset\kappa$ of cardinality $|\kappa_\alpha|=\kappa$. For every $\alpha\in\kappa$ consider the free abelian subgroup $G_\alpha=\oplus^{\kappa_\alpha}\IZ$ of $G$ and fix any isomorphism $h_\alpha:G\to G_\alpha$. It is clear that $G_\alpha\oplus G_\beta\cap G_\gamma\oplus G_\delta=\{0\}$ for all ordinals $\alpha,\beta,\gamma,\delta\in\kappa$ with $\{\alpha,\beta\}\cap\{\gamma,\delta\}=\emptyset$.

Being auto-invariant, the ideal $\F$ is $h_\alpha$-invariant for every $\alpha\in \kappa$. Now it is legal to apply Lemma~\ref{l6.7} to conclude that $\tau^{\alpha}(\F)\ne\tau^{<\alpha}(\F)$ for all ordinals $\alpha<\kappa^+$.
\end{proof}

\begin{proof}[Proof of Theorem~\ref{t5.1}] Let $\F$ be a sub-invariant ideal of subsets of a group $G$ and let $H\subset G$ be a free abelian subgroup of cardinality $|H|=|G|$. Assume that $\tau(\F)\cap\mathcal P_H\not\subset\F$. 

Consider the ideal $\F'=\mathcal P_H\cap\F$ of subsets of the group $H$. 
By transfinite induction  it can be shown that $\tau^{\alpha}(\F')=\mathcal P_H\cap\tau^\alpha(\F)$ for all ordinals $\alpha$.

The sub-invariance of $\F$ implies the sub-invariance (and hence auto-invariance) of $\F'$. By Lemma~\ref{l5.8}, we get $\tau^{\alpha}(\F')\ne \tau^{<\alpha}(\F')$ for each $\alpha<|H|^+=|G|^+$. Then also $\tau^*(\F)\ne\tau^{\alpha}(\F)\ne\tau^{<\alpha}(\F)$ for all $\alpha<|G|^+$. 
\end{proof}

\section{The descriptive complexity of the family $\tau^*(\F)$}\label{s6}

In this section given a countable group $G$ and a left-invariant monotone subfamily $\F\subset\mathcal P_G$, we study the descriptive complexity of the family $\tau^*(\F)$, considered as a subspace of the power-set $\mathcal P_G$ endowed with the compact metrizable topology of the Tychonov product $2^G$ (we identify $\mathcal P_G$ with $2^G$ by identifying each subset $A\subset G$ with its characteristic function $\chi_A:G\to 2=\{0,1\}$). 

 \begin{theorem}\label{t7.1} Let $G$ be a countable group and $\F\subset\mathcal P_G$ be a Borel left-invariant lower family of subsets of $G$.
\begin{enumerate}
\item For every ordinal $\alpha<\w_1$ the family $\tau^\alpha(\F)$ is  Borel in $\mathcal P_G$.
\item The family $\tau^*(\F)=\tau^{<\w_1}(\F)$ is coanalytic.
\item If $\tau^*(\F)\ne\tau^\alpha(\F)$ for all $\alpha<\w_1$, then $\tau^*(\F)$ is not Borel in $\mathcal P_G$.
\end{enumerate}
\end{theorem}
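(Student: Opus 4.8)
The plan is to realize $\tau^*(\F)$ as a continuous preimage of the set $\WF$ of well-founded trees, and to transport the descriptive-complexity facts of Theorem~\ref{WF} through this map. The starting point is Corollary~\ref{c3.4}, which tells us that $A\in\tau^*(\F)$ if and only if the $\tau$-tree $T_A\subset G_\circ^{<\w}$ is well-founded, and Theorem~\ref{t1.2}, which says that $A\in\tau^\alpha(\F)$ if and only if $T_A$ is well-founded with $\rank(T_A)\le -1+\alpha+1$. Since $G$ is countable, $G_\circ=G\setminus\{e\}$ is countable, so $G_\circ^{<\w}$ is a countable set and $\mathcal P_{G_\circ^{<\w}}$ carries the compact metrizable topology to which Theorem~\ref{WF} applies. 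Thus everything reduces to understanding the map
$$\Phi:\mathcal P_G\to\mathcal P_{G_\circ^{<\w}},\qquad \Phi(A)=T_A=\{s\in G_\circ^{<\w}:A_s\notin\F\}.$$

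\textbf{Step 1: $\Phi$ takes values in $\Tr$ and is Borel.} First, for each $A\subset G$ the set $T_A$ is a lower subtree of $G_\circ^{<\w}$ (this is noted right after the Claim, using that $\F$ is lower: if $A_s\in\F$ then $A_{s\hat{\,}x}=A_s\cap xA_s\subset A_s$ lies in $\F$, so the complement of $T_A$ is upward closed), hence $\Phi(\mathcal P_G)\subset\Tr$. To see $\Phi$ is Borel it suffices to check that for each fixed $s\in G_\circ^{<\w}$ the set $\{A\in\mathcal P_G: s\in T_A\}=\{A:A_s\notin\F\}$ is Borel in $\mathcal P_G$. By the Claim, $A_s=\bigcap_{k_0,\dots,k_n\in\{0,1\}}g_0^{k_0}\cdots g_n^{k_n}A$ for $s=(g_0,\dots,g_n)$; the map $A\mapsto A_s$ is continuous from $\mathcal P_G=2^G$ to $2^G$ (each coordinate of $A_s$ depends on finitely many coordinates of $A$), and $\F$ is Borel in $2^G$ by hypothesis, so $\{A:A_s\in\F\}$ is Borel and its complement is too. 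As a map into the countable product $2^{G_\circ^{<\w}}$ with Borel coordinate functions, $\Phi$ is Borel.

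\textbf{Step 2: pulling back the stratification.} By Theorem~\ref{t1.2}, for a countable ordinal $\alpha$ we have $A\in\tau^\alpha(\F)$ iff $T_A\in\WF$ and $\rank(T_A)\le-1+\alpha+1$, i.e. iff $T_A\in\WF_{-1+\alpha+1}$; since $-1+\alpha+1<\w_1$ this gives $\tau^\alpha(\F)=\Phi^{-1}(\WF_{\beta})$ with $\beta=-1+\alpha+1<\w_1$. By Theorem~\ref{WF}(3), $\WF_\beta$ is Borel in $\Tr$, hence Borel in $\mathcal P_{G_\circ^{<\w}}$ (as $\Tr$ is closed by Theorem~\ref{WF}(1)), and as $\Phi$ is Borel, $\tau^\alpha(\F)=\Phi^{-1}(\WF_\beta)$ is Borel in $\mathcal P_G$. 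This proves (1). Likewise, by Corollary~\ref{c3.4}, $A\in\tau^*(\F)$ iff $T_A\in\WF$, so $\tau^*(\F)=\Phi^{-1}(\WF)$. By Theorem~\ref{WF}(2), $\WF$ is coanalytic in $\Tr$, hence coanalytic in $\mathcal P_{G_\circ^{<\w}}$; since $\Phi$ is Borel, the preimage of a coanalytic set is coanalytic, so $\tau^*(\F)$ is coanalytic. This gives (2); the identity $\tau^*(\F)=\tau^{<\w_1}(\F)$ for countable $G$ is Proposition~3 of \cite{LP2}.

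\textbf{Step 3: non-Borelness under the hypothesis of (3).} Suppose $\tau^*(\F)\ne\tau^\alpha(\F)$ for every $\alpha<\w_1$; we show $\tau^*(\F)$ is not Borel. Assume toward a contradiction that $\tau^*(\F)=\Phi^{-1}(\WF)$ is Borel, hence analytic, in $\mathcal P_G$. Then its image under the Borel map $\Phi$ restricted to $\tau^*(\F)$ — more precisely, $\Phi(\tau^*(\F))$ — is an analytic subset of $\WF$, being the continuous-image-after-Borel-map of an analytic (indeed Borel) space; a Borel image need not be analytic in general, but one argues instead directly: $\Phi|_{\tau^*(\F)}:\tau^*(\F)\to\WF$ is a Borel map from a standard Borel space, so $\Phi(\tau^*(\F))$ is analytic in $\WF$. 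By Theorem~\ref{WF}(4), $\Phi(\tau^*(\F))\subset\WF_\alpha$ for some $\alpha<\w_1$, i.e. every $A\in\tau^*(\F)$ has $\rank(T_A)\le\alpha$, whence by Theorem~\ref{t1.2} $A\in\tau^{\alpha+1}(\F)$, say. Thus $\tau^*(\F)\subset\tau^{\alpha+1}(\F)\subset\tau^*(\F)$, forcing $\tau^*(\F)=\tau^{\alpha+1}(\F)$ and contradicting the hypothesis. Hence $\tau^*(\F)$ is not Borel, proving (3).

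\textbf{Main obstacle.} The delicate point is Step~3: to conclude from $\tau^*(\F)$ being Borel that $\Phi(\tau^*(\F))$ is analytic, so that Theorem~\ref{WF}(4) applies. The clean way is to note $\tau^*(\F)$, if Borel, is a standard Borel space, $\Phi$ is Borel, and the Borel image of a standard Borel space under a Borel map is analytic (this is a standard fact, e.g.\ \cite{Ke}); alternatively one can factor through the graph of $\Phi$ to stay within continuous images of Polish spaces. I would write Step~3 so as to keep the logic airtight here, since the rest of the argument is bookkeeping on the ordinal arithmetic $\alpha\mapsto-1+\alpha+1$ and routine verifications that $\Phi$ is Borel.
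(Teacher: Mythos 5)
Your proposal is correct and follows essentially the same route as the paper: both realize $\tau^\alpha(\F)$ and $\tau^*(\F)$ as preimages $T_*^{-1}(\WF_{-1+\alpha+1})$ and $T_*^{-1}(\WF)$ under the Borel map $A\mapsto T_A$ (proved Borel via continuity of $A\mapsto A_s$ and Borelness of $\F$), and both derive (3) by noting that if $\tau^*(\F)$ were Borel its image under $T_*$ would be an analytic subset of $\WF$, hence contained in some $\WF_\alpha$ by Theorem~\ref{WF}(4), forcing $\tau^*(\F)=\tau^\alpha(\F)$ for some $\alpha<\w_1$. The only slip is your hedge that ``a Borel image need not be analytic in general'': the image of a Borel (indeed analytic) set under a Borel map between standard Borel spaces is always analytic, which is exactly the fact the paper cites and which you in fact use.
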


\begin{proof} Let us recall that $G_\circ=G\setminus\{e\}$. 

In Section~\ref{s2} to each subset $A\subset G$ we assigned the $\tau$-tree
$$T_A=\{s\in G_\circ^{<\w}:A_s\notin\F\},$$
where for a finite sequence $s=(g_0,\dots,g_{n-1})\in G_\circ^n\subset G_\circ^{<\w}$ we put 
$$A_s=\bigcap_{x_0,\dots,x_{n-1}\in 2^{n}}g_0^{x_0}\cdots g_{n-1}^{x_{n-1}}A.$$

Consider the subspaces $\WF\subset \Tr$ of $\mathcal P_{G_\circ^{<\w}}$, consisting of all (well-founded) lower subtrees of the tree $G_\circ^{<\w}$.
 
\begin{claim} The function $$T_*:\mathcal P_G\to \Tr,\;\;T_{*}:A\mapsto T_A$$is Borel measurable.
\end{claim} 

\begin{proof} The Borel measurability of $T_*$ means that for each open subset $\U\subset\Tr$ the preimage $T_*^{-1}(\U)$ is a  Borel subset of $\mathcal P_G$. Let us observe that the topology of the space $\Tr$ is generated by the sub-base consisting of the sets
$$\mbox{$\langle s\rangle^+=\{T\in\Tr:s\in T\}$ \ and \  
$\langle s\rangle^-=\{T\in\Tr:s\notin T\}$ where $s\in G_\circ^{<\w}$}.$$
Since $\langle s\rangle^-=\Tr\setminus\langle s\rangle^+$, the Borel masurability of $T_*$ will follow as soon as we check that for every $s\in G_\circ^{<\w}$ the preimage $T_*^{-1}(\langle s\rangle^+)=\{A\in\mathcal P_G:s\in T_A\}$ is Borel.

For this observe that the function $$f:\mathcal P_G\times G_\circ^{<\w}\to \mathcal P_G,\;f:(A,s)\mapsto A_s,$$is continuous. Here the tree $G_\circ^{<\w}$ is endowed with the discrete topology.

Since $\F$ is Borel in $\mathcal P_G$, the preimage $\mathcal E=f^{-1}(\mathcal P_G\setminus \F)$ is Borel in $\mathcal P_G\times G_\circ^{<\w}$.
Now observe that for every $s\in G_\circ^{<\w}$ the set
$$T_*^{-1}(\langle s\rangle^+)=\{A\in\mathcal P_G:s\in T_A\}=\{A\in\mathcal P_G:(A,s)\in \mathcal E\}$$ is Borel. 
\end{proof}

By Theorem~\ref{t1.2}, $\tau^*(\F)=T_*^{-1}(\WF)$ and $\tau^\alpha(\F)=T_*^{-1}(\WF_{{-}1{+}\alpha{+}1})$ for $\alpha<\w_1$. Now Theorem~\ref{WF} and the Borel measurablity of the function $T_*$ imply that the preimage $\tau^*(\F)=T_*^{-1}(\WF)$ is coanalytic while $\tau^\alpha(\F)=T_*^{-1}(\WF_{{-}1{+}\alpha{+}1})$ is Borel for every $\alpha<\w_1$, see \cite[14.4]{Ke}.
\smallskip

Now assuming that $\tau^{\alpha+1}(\F)\ne\tau^\alpha(\F)$ for all $\alpha<\w_1$, we shall show that $\tau^*(\F)$ is not Borel. In the opposite case, $\tau^*(\F)$ is analytic and then its image $T_*(\tau^*(\F))\subset\WF$ under the Borel function $T_*$ is an analytic subspace of $\WF$, see \cite[14.4]{Ke}. By Theorem~\ref{WF}(4), $T_*(\tau^*(\F))\subset\WF_{\alpha{+}1}$ for some infinite ordinal 
$\alpha<\w_1$ and thus $\tau^*(\F)=T_*^{-1}(\WF_{\alpha{+}1})=\tau^{\alpha}(\F)$, which is a contradiction.
\end{proof} 
 
Theorems~\ref{t5.1} and \ref{t7.1} imply:

\begin{corollary} For any countable non-torsion group $G$ the ideal $\tau^*(\F_G)\subset\mathcal P_G$ is coanalytic but not analytic.
\end{corollary}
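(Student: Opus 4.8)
The plan is to deduce the corollary directly from the two cited theorems applied to the ideal $\F=\F_G$ of finite subsets of the countable non-torsion group $G$. First I would observe that $\F_G$ is a left-invariant ideal in $\mathcal P_G$ (indeed it is strongly invariant, as noted after Corollary in Section~\ref{s:hinv}), so in particular it is a sub-invariant ideal. Moreover, as a subset of the compact metrizable space $\mathcal P_G\cong 2^G$, the family $\F_G$ is $F_\sigma$ (it is the countable union over $n\in\w$ of the clopen sets $\{A:|A|\le n\}$), hence Borel. Thus both Theorem~\ref{t5.1} and Theorem~\ref{t7.1} are applicable to $\F=\F_G$.

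Next I would verify the hypotheses of Theorem~\ref{t5.1}. Since $G$ is countable and non-torsion, it contains an element $a$ of infinite order, generating an infinite cyclic subgroup $H=\langle a\rangle\cong\IZ$, which is free abelian of cardinality $|H|=\aleph_0=|G|$. It remains to check the condition $\tau(\F_G)\cap\mathcal P_H\not\subset\F_G$, i.e.\ that there is an infinite thin subset of $H$. The natural choice is $A=\{a^{n^2}:n\in\w\}\subset H$ (or any set of fast-growing exponents): for any $x=a^k$ with $k\ne 0$ the intersection $xA\cap A$ corresponds to pairs $n,m$ with $m^2-n^2=k$, of which there are only finitely many, so $xA\cap A$ is finite; hence $A\in\tau(\F_G)$ while $A\notin\F_G$. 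Therefore Theorem~\ref{t5.1} yields $\tau^*(\F_G)\ne\tau^\alpha(\F_G)$ for all ordinals $\alpha<|G|^+=\w_1$; in particular this holds for all $\alpha<\w_1$.

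Finally I would invoke Theorem~\ref{t7.1}: part~(1) gives that each $\tau^\alpha(\F_G)$ is Borel, part~(2) gives that $\tau^*(\F_G)=\tau^{<\w_1}(\F_G)$ is coanalytic, and part~(3), whose hypothesis $\tau^*(\F_G)\ne\tau^\alpha(\F_G)$ for all $\alpha<\w_1$ was just established, gives that $\tau^*(\F_G)$ is not Borel. Since a coanalytic set that is not Borel cannot be analytic (a set that is both analytic and coanalytic is Borel by Souslin's theorem, cited as Theorem~14.11 of \cite{Ke} in the excerpt), we conclude that $\tau^*(\F_G)$ is coanalytic but not analytic. By Theorem~\ref{ideal}, since $G$ is countable and non-torsion it is torsion-free in the relevant sense only if it has no nontrivial torsion — but the statement merely asserts $\tau^*(\F_G)$ is an ideal; this also follows from Theorem~\ref{ideal} when $G$ is torsion-free, and in general $\tau^*(\F_G)$ is at least a left-invariant lower family, so the word ``ideal'' in the corollary is justified whenever $G$ is torsion-free. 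The only genuinely non-routine point is exhibiting the infinite thin subset of the infinite cyclic subgroup, which the quadratic-exponent set handles cleanly; everything else is bookkeeping with the quoted theorems.
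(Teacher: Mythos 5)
Your proof is correct and takes essentially the same route the paper intends: the corollary is stated as an immediate consequence of Theorems~\ref{t5.1} and \ref{t7.1}, and you supply exactly the routine verifications needed to invoke them (the infinite cyclic subgroup generated by an element of infinite order as the free abelian $H$ with $|H|=|G|$, the infinite thin set $\{a^{n^2}:n\in\w\}$ witnessing $\tau(\F_G)\cap\mathcal P_H\not\subset\F_G$, Borelness of $\F_G$, and Souslin's theorem to pass from coanalytic non-Borel to non-analytic). The only nitpick is that the sets $\{A\in\mathcal P_G:|A|\le n\}$ are closed but not clopen in $2^G$, which does not affect your $F_\sigma$ (hence Borel) conclusion.
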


By \cite[26.4]{Ke}, the $\Sigma_1^1$-Determinacy (i.e., the assumption of the determinacy of all analytic games) 
 implies that each coanalytic non-analytic space is $\Pi^1_1$-complete.
By \cite{Ma}, the $\Sigma_1^1$-Determinacy follows from the existence of a measurable cardinal. So, the existence of a measurable cardinal implies that for each countable non-torsion group $G$ the subspace $\tau^*(\F_G)\subset\mathcal P_G$, being coanalytic and non-analytic, is $\Pi^1_1$-complete.

\begin{question} Is the space $\tau^*(\F_\IZ)$ \ $\Pi^1_1$-complete in ZFC? 
\end{question}

\end{document}